\title{Efficient two-parameter persistence computation via cohomology}
\author{Ulrich Bauer}{Department of Mathematics, TUM School of Computation, Information and Technology, and Munich Data Science Institute, Technical University of Munich (TUM), Germany \and
\url{www.ulrich-bauer.org}}{mail@ulrich-bauer.org}{https://orcid.org/0000-0002-9683-0724}{Supported by the German Research Foundation (DFG) through the Collaborative Research Center SFB/TRR 109 \emph{Discretization in Geometry and Dynamics} -- 195170736.}\author{Fabian Lenzen}{Department of Mathematics, TUM School of Computation, Information and Technology, Technical University of Munich (TUM), Germany}{fabian.lenzen@tum.de}{https://orcid.org/0000-0001-9579-6854}{}
\author{Michael Lesnick}{Department of Mathematics, SUNY Albany, USA}{mlesnick@albany.edu}{https://orcid.org/0000-0003-1924-3283}{}
\authorrunning{U. Bauer, F. Lenzen, M. Lesnick}
\keywords{Persistent homology, persistent cohomology, two-parameter persistence, clearing}
\newcommand{\arxivID}{2303.11193}
\newif\iffullversion
		\providecommand\hyper@newdestlabel[2]{}
		\providecommand\HyperFirstAtBeginDocument{\AtBeginDocument}
			\global\let\oldnewlabel\newlabel
			\gdef\newlabel#1#2{\newlabelxx{#1}#2}
			\gdef\newlabelxx#1#2#3#4#5#6{\oldnewlabel{#1}{{#2}{#3}}}
				\let\newlabel\oldnewlabel
		\global\let\hyper@last\relax
		\gdef\HyperFirstAtBeginDocument#1{#1}
		\providecommand\HyField@AuxAddToFields[1]{}
		\providecommand\HyField@AuxAddToCoFields[2]{}
\newcommand{\CExtra}[3]{
	\AtAppendix{\crefalias{#1}{appendix-#1}}
	\iffullversion
		\crefname{appendix-#1}{#2}{#3}
	\else
		\crefformat{appendix-#1}{#2 ##2##1##3 in the full version}
		\crefrangeformat{appendix-#1}{#3~##3##1##4 to~##5##2##6 in the full version}
		\crefmultiformat{appendix-#1}{#3~##2##1##3}{ and~##2##1##3 in the full version}{, ##2##1##3}{and~##2##1##3 in the full version}
	\fi
}
\crefname{equation}{}{}
\Crefname{equation}{}{}
\tikzset{%
	on layer/.code={
		\pgfonlayer{#1}\begingroup
		\aftergroup\endpgfonlayer
		\aftergroup\endgroup
}}
\newlength{\ModuleDiagramSymbolSize}
\tikzset{
	really densely dotted/.style={line cap=round, dash pattern=on 0pt off 2\pgflinewidth},
	module diagram symbol size/.style={
		generator/.style={shape=circle, fill, outer sep=0, inner sep=#1},
		relation/.style={shape=diamond, fill, outer sep=0, inner sep=#1},
		syzygy/.style={shape=cross out, draw, outer sep=0, inner sep=#1}
	},
	every module diagram/.style={
		module diagram symbol size=.2ex,
		line cap=round,
	},
	module diagram/.style={
		every module diagram,
		every picture
	}
}
\NewDocumentCommand{\CochainsAbsolute}{
	s
	O{} 
	m 
	O{} 
	m 
	O{below left} 
}{
	\begin{scope}[#2]
		\IfBooleanTF{#1}{
			\coordinate[at=(#3), label={[#2,#4]#6:#5}] (S);
			\coordinate[at=(#3 |- inf)] (R1);
			\coordinate[at=(#3 -| inf)] (R2);
			\coordinate[at=(inf)] (G);
		}{
			\node[syzygy, at=(#3), label={[#2,#4]#6:#5}] (S) {};
			\node[relation, at=(#3 |- inf)] (R1) {};
			\node[relation, at=(#3 -| inf)] (R2) {};
			\node[generator, at=(inf), opacity=.5] (G) {};
		}
		\fill[opacity=0.2] (S.center) |- (inf) |- cycle;
		\IfBooleanTF{#1}{
			\draw (inf -| #3) -- (S) -- (#3 -| inf);
		}{
			\draw
			(pinf -| #3) -- (S) -- (#3 -| pinf)
			(qinf -| #3) -- (R1) -- (G) -- (R2) -- (#3 -| qinf);
			\draw[every picture, really densely dotted]
			(pinf -| #3) -- (qinf -| #3)
			(pinf |- #3) -- (qinf |- #3);
		}
}
\NewDocumentCommand{\CochainsRelative}{%
	s
	O{} 
	m 
	O{} 
	m 
	O{above right} 
	O{} 
	O{} 
}{
	\begin{scope}[#2]
		\IfBooleanTF{#1}{
			\coordinate[at=(#3 |- inf), label={[#4]above:#7}] (G1);
			\coordinate[at=(#3 -| inf), label={[#4]right:#8}] (G2);
			\coordinate[at=(#3), label={[#4]#6:#5}] (R);
		}{
			\node[generator, at=(#3 |- inf), label={[#4]above:#7}] (G1) {};
			\node[generator, at=(#3 -| inf), label={[#4]right:#8}] (G2) {};
			\node[relation, at=(#3), label={[#4]#6:#5}] (R) {};
		}
		\fill[opacity=0.2] (#3) |- (P |- inf) |- (P -| inf) |- (#3);
		\IfBooleanTF{#1}{
			\draw (#3 |- inf) -- (R) -- (#3 -| inf);
		}{
			\draw
			(#3 |- qinf) -- (G1) -- (P |- inf)
			(#3 -| qinf) -- (G2) -- (P -| inf)
			(#3 |- pinf) -- (R) -- (#3 -| pinf);
			\draw[really densely dotted]
			(#3 |- pinf) -- (#3 |- qinf)
			(#3 -| pinf) -- (#3 -| qinf);
		}
	\end{scope}
}
\newcommand{\infinityCoordinate}{4,4}
\newcommand{\coordinates}{
	\coordinate (inf) at (\infinityCoordinate);
	\coordinate (pinf) at ($(inf)-(0.75,0.75)$);
	\coordinate (qinf) at ($(inf)-(0.25,0.25)$);
	\coordinate (O) at (0,0);
	\coordinate (P) at (-0.5,-0.5);
}
\ProvideDocumentCommand{\DrawGrid}{O{0} D(){0.5} m O{0} D(){0.5} m}{
	\begin{scope}[on background layer]
		\foreach \i in {#1, #2, ..., #3}{
			\foreach \j in {#4, #5, ..., #6}{
				\fill[gray] (\i, \j) circle (0.3pt);
			}
		}
	\end{scope}
}
\newcommand{\AxesHomology}{
	\coordinates
	\begin{scope}[]
		\draw[->] (-0.75,0) -- (O -| inf);
		\draw[->] (0,-0.75) -- (O |- inf);
	\end{scope}
}
\NewDocumentCommand{\AxesCohomology}{
	s 
}{
	\coordinates
	\begin{scope}[]
		\IfBooleanTF{#1}{
			\draw[<-, shorten <=-8] (0,0) -- (O -| inf);
			\draw[<-, shorten <=-8] (0,0) -- (O |- inf);
		}{
			\draw[<-, shorten <=-8] (0,0) -- (O -| pinf);
			\draw[<-, shorten <=-8] (0,0) -- (O |- pinf);
			\draw[really densely dotted]  (O -| pinf) -- (O -| qinf);
			\draw[really densely dotted] (O |- pinf) -- (O |- qinf);
			\draw          (O -| qinf) -- (O -| inf);
			\draw          (O |- qinf) -- (O |- inf);
		}
	\end{scope}
}
\newcommand{\FreeModule}[3][]{
	\begin{scope}[#1]
		\node[#3, at=(#2)] {};
		\coordinate[at=(#2)] (n) {};
		\fill[opacity=.25] (#2) rectangle (inf);
		\draw (n) -- (n |- inf) (n) -- (n -| inf);
	\end{scope}
}
\newcommand{\InjectiveModule}[2][]{
	\path (#2) coordinate (n);
	\fill[#1, opacity=.25] (n) rectangle (P);
	\draw[#1] (n) -- (n |- P) (n) -- (n -| P);
}
\newcommand{\ie}{i.e.}
\newcommand{\eg}{e.g.}
\newcommand{\wrt}{w.r.t.}
\renewcommand{\wrt}{with respect to}
\newcommand{\textiff}{iff}
\newcommand{\PH}{\textsc{ph}}
\newcommand{\PC}{\textsc{pc}}
\newcommand{\MFR}{\textsc{mfr}}
\let\xto\xrightarrow
\newcommand{\R}{\mathbf{R}}
\newcommand{\N}{\mathbf{N}}
\newcommand{\Z}{\mathbf{Z}}
\newcommand{\ZZ}{\mathbf{Z}^2}
\newcommand{\VR}{\mathit{VR}}
\newcommand{\Hom}{\mathrm{Hom}}
\newcommand{\IHom}{\mathsf{Hom}}
\newcommand{\vect}{\mathbf{vec}}
\newcommand{\rg}{\mathrm{rg}}
\newcommand{\cg}{\mathrm{cg}}
\renewcommand{\d}{\delta}
\newcommand{\id}{\mathrm{id}}
\DeclareMathOperator{\diam}{diam}
\DeclareMathOperator{\rk}{rk}
\DeclareMathOperator{\im}{im}
\DeclareMathOperator{\colim}{colim}
\DeclareMathOperator{\piv}{piv}
\DeclareMathOperator{\Lim}{Lim}
\DeclareMathOperator{\Colim}{Colim}
\DeclareMathOperator{\lpiv}{l-piv}
\DeclareMathOperator{\cpiv}{c-piv}
\mathchardef\mhyphen="2D
\newcommand{\Pers}[1]{#1\mhyphen\mathbf{pers}}
\newcommand{\Proj}[1]{\mathcal{P}_{\Pers{#1}}}
\newcommand{\Inj}[1]{\mathcal{I}_{\Pers{#1}}}
\newcommand{\lex}[1]{#1_{\mathrm{lex}}}
\newcommand{\colex}[1]{#1_{\mathrm{colex}}}
\newcommand{\Db}{\mathcal{D}^\mathrm{b}}
\DeclarePairedDelimiterX{\Set}[1]{\{}{\}}{\setargs{#1}}
\NewDocumentCommand{\setargs}{>{\SplitArgument{1}{;}}m}{\setargsaux#1}
\NewDocumentCommand{\setargsaux}{mm}{\IfNoValueTF{#2}{#1} {#1\nonscript\:\delimsize\vert\allowbreak\nonscript\:\mathopen{}#2}}%
\DeclarePairedDelimiterX{\Span}[1]{\langle}{\rangle}{\setargs{#1}}
\DeclarePairedDelimiter{\Shift}{\langle}{\rangle}
\DeclarePairedDelimiter{\abs}{\lvert}{\rvert}
\NewDocumentCommand{\Mtx}{s}{
	\IfBooleanTF{#1}{\@MtxStar}{\@Mtx}
}
\NewDocumentCommand{\@Mtx}{om}{
	\IfNoValueTF{#1}{
		\mleft(\begin{smallmatrix}#2\end{smallmatrix}\mright)
	}{
		\mathopen#1(\begin{smallmatrix}#2\end{smallmatrix} \mathclose#1)
	}
}
\NewDocumentCommand{\@MtxStar}{O{c}om}{
	\IfNoValueTF{#2}{
		\mleft(\begin{smallmatrix*}[#1]#3\end{smallmatrix*}\mright)
	}{
		\mathopen#2(\begin{smallmatrix*}[#1]#3\end{smallmatrix*} \mathclose#2)
	}
}
\newenvironment{smallcases}{\left\{\begin{smallmatrix*}[l]}{\end{smallmatrix*}\right.}
\DeclareRobustCommand\vdots{\mathpalette\@vdots{}}
\newcommand*{\@vdots}[2]{%
	\sbox0{$#1\cdotp\cdotp\cdotp\m@th$}%
	\sbox2{$#1.\m@th$}%
	\vbox{%
		\dimen@=\wd0 %
		\advance\dimen@ -3\ht2 %
		\kern.5\dimen@
		\dimen@=\wd2 %
		\advance\dimen@ -\ht2 %
		\dimen2=\wd0 %
		\advance\dimen2 -\dimen@
		\vbox to \dimen2{%
			\offinterlineskip
			\copy2 \vfill\copy2 \vfill\copy2 %
		}%
	}%
}
\DeclareRobustCommand\ddots{\mathinner{\mathpalette\@ddots{}\mkern\thinmuskip}}
\newcommand*{\@ddots}[2]{%
	\sbox0{$#1\cdotp\cdotp\cdotp\m@th$}%
	\sbox2{$#1.\m@th$}%
	\vbox{%
		\dimen@=\wd0 %
		\advance\dimen@ -3\ht2 %
		\kern.5\dimen@
		\dimen@=\wd2 %
		\advance\dimen@ -\ht2 %
		\dimen2=\wd0 %
		\advance\dimen2 -\dimen@
		\vbox to \dimen2{%
			\offinterlineskip
			\hbox{$#1\mathpunct{.}\m@th$}%
			\vfill
			\hbox{$#1\mathpunct{\kern\wd2}\mathpunct{.}\m@th$}%
			\vfill
			\hbox{$#1\mathpunct{\kern\wd2}\mathpunct{\kern\wd2}\mathpunct{.}\m@th$}%
		}%
	}%
}
\SetMathAlphabet\mathsf{sansbold}{OT1}{lmss}{bx}{n}
\SetMathAlphabet\mathbf{sansbold}{OT1}{lmss}{bx}{n}
\SetMathAlphabet{\mathfrak}{sansbold}{U}{euf}{b}{n}
\g@addto@macro\bfseries{\mathversion{sansbold}}
\renewcommand{\@algocf@capt@plain}{above}
\newcommand{\FunctionName}[1]{\textnormal{\texttt{#1}}}
\newcommand\NewFunction[2]{\newcommand{#1}[1]{\FunctionName{#2}\ensuremath{(##1)}}}
\NewFunction{\Ker}{\hyperref[alg:lw-algorithm]{Ker}}
\NewFunction{\MgsWithKer}{\hyperref[alg:ker-mgs]{MGSWithKer}}
\NewFunction{\Factorize}{\hyperref[algo:factorize]{Factorize}}
\NewFunction{\Minimize}{\hyperref[algo:minimize]{Minimize}}
\NewFunction{\MinimizeChain}{\hyperref[algo:chain-chunk]{MinimizeChainComplex}}
\NewFunction{\MinimizeCochain}{\hyperref[algo:cochain-chunk]{MinimizeCohainComplex}}
\NewFunction{\PopHeap}{PopMin}
\NewFunction{\PushHeap}{Push}
\NewFunction{\Bireduce}{\hyperref[alg:bigraded reduction]{Bireduce}}
\NewFunction{\Sparsify}{Sparsify}
\NewFunction{\Swap}{Swap}
\DeclareFontShape{T1}{lmr}{m}{scit}{<->ssub*lmr/m/scsl}{}
\begin{document}
\maketitle
\begin{abstract}%
	Clearing is a simple but effective optimization for the standard algorithm of persistent homology (\PH),
	which dramatically improves the speed and scalability of  \PH\ computations for Vietoris--Rips filtrations.
	Due to the quick growth of the boundary matrices of a Vietoris--Rips filtration with increasing dimension,
	clearing is only effective when used in conjunction with a dual (cohomological) variant of the standard algorithm.
	This approach has not previously been applied successfully to the computation of two-parameter \PH.

	We introduce a cohomological algorithm for computing minimal free resolutions of two-parameter \PH\ that allows for clearing.
	To derive our algorithm, we extend the duality principles which underlie the one-parameter approach to the two-parameter setting.
	We provide an implementation and report experimental run times for function-Rips filtrations.
	Our method is faster than the current state-of-the-art by a factor of up to 20.
\end{abstract}

\section{Introduction}
\subparagraph*{Motivation}
Persistent homology \cite{EdelsbrunnerLetscherEtAl:2002, ZomorodianCarlsson:2005, OtterPorterEtAl:2017} analyzes how the homology of a filtered topological space changes as the filtration parameter increases.
By assigning filtered spaces (\eg, Vietoris--Rips filtrations) to data sets, it provides simple signatures of the data called \emph{barcodes}, which encode multi-scale information about the shape of the data.
Thanks to recent advances in \PH\ computation and software \cite{Bauer:2019, ZhangXiaoEtAl:2020, AggarwalPeriwal:2021, PerezHaukeEtAl:2021, gudhi:urm, HenselmanGhrist:2017, Henselman-Petrusek:2021},
\PH\ has become popular for practical data applications \cite{giunti2021TDA}. 
A well-known stability result \cite{Cohen-SteinerEdelsbrunnerEtAl:2007, BauerLesnick:2015, BlumbergLesnick:2022a} guarantees that small perturbations (in the Gromov--Hausdorff distance)  of the input data lead to small perturbations (in the bottleneck distance) of the barcodes of the \PH\ of Vietoris–Rips filtrations.
However, Vietoris--Rips \PH\ is notoriously unstable to outliers.
Besides other strategies \cite[Section 1.7]{BlumbergLesnick:2022a}, a commonly proposed remedy for this is the introduction of a second filtration parameter controlling the local density of the point cloud, which leads to the notions of \emph{two-} and \emph{multi-parameter} \PH\ \cite{CarlssonZomorodian:2009,Sheehy:2012,CorbetKerberEtAl:2021a, LesnickWright:2015,rolle2020multiparameter,BotnanLesnick:2022}.
A central computational problem of two-parameter \PH, somewhat akin to the computation of a barcode, is the computation of a \emph{minimal free presentation} or a \emph{minimal free resolution} (\MFR) of the \PH\ module.
While a resolution contains more information than a presentation, the underlying algorithmic problems are essentially the same, and we focus on the computation of a \MFR\ in this work.
Such a resolution is often quite small in practice \cite{FugacciKerberEtAl:2023}, and computing it is a natural first step in computing invariants or metrics in the multi-parameter setting \cite{BotnanLesnick:2022}.

Computing a \MFR\ of two-parameter \PH\ is more involved than in the one parameter case.
The problem can be solved by classical Gr\"obner basis algorithms, which work in much greater generality but do not scale well enough for practical TDA applications \cite{LesnickWright:2022}.
Recently, a specialized algorithm was introduced \cite{LesnickWright:2022,KerberRolle:2021}, which
is far more efficient than the Gröbner basis approach, both in theory and in practice.  This has substantially lowered the barrier to practical data analysis with two-parameter persistence \cite{kerber2020multi}.
For recent applications, see, e.g., \cite{benjamin2022multiscale,vipond2021multiparameter}.

Nevertheless, existing software for two-parameter \PH\ is much slower and less scalable than one-parameter \PH\ implementations such as \cite{Bauer:2019, AggarwalPeriwal:2021, PerezHaukeEtAl:2021,ZhangXiaoEtAl:2020}.
The approach of  \cite{LesnickWright:2022,KerberRolle:2021} boils down to a matrix reduction scheme similar to the standard algorithm of one-parameter \PH~\cite{ZomorodianCarlsson:2005},
and has the same asymptotic run time, cubic in the size of the complex.
However, modern one-parameter \PH\ algorithms incorporate several critical optimizations.
In particular, it is known that for Vietoris--Rips filtrations, \emph{clearing} \cite{BauerKerberEtAl:2014b, ChenKerber:2011} leads to major performance gains when combined with a dual (cohomological) variant of the standard persistence algorithm \cite{deSilvaMorozovEtAl:2011,BauerKerberEtAl:2014b}.
All state-of-the-art software for computing Vietoris--Rips \PH\ employ this strategy.
In two parameters, however, working with persistent cohomology (\PC) is more challenging, essentially because, in contrast to one parameter, relative simplicial cochains of filtered complexes do not form free modules.

\subparagraph*{Contributions}
In order to compute \MFR s of \PH\ of function-Rips bifiltrations more efficiently,
we introduce a cohomological variant of the algorithm of \cite{LesnickWright:2022,KerberRolle:2021},
which we outline now.

Let $K_*$ be a finite simplicial $\Z^n$-filtration with $K = \bigcup_{z \in \Z^n} K_z$.  Assume that $K_*$ is one-critical, i.e., the set $\Set{z ; \sigma \in K_z}$ has a unique minimal element $g(\sigma)$ for every $\sigma \in K$.  We define a certain cochain complex $N^\bullet(K_*)$ of free persistence modules.  In this paper, $H_\bullet(K)$ always denotes the reduced simplicial homology of $K$.
If $H_d(K_z) = 0$ for all $d$ but finitely many indices $z\in \Z^n$ (which can easily be ensured by adding additional simplices to $K_*$),
then $H^{d + n}(N^\bullet(K_*))$ is isomorphic to the dual module of $H_d(K_*)$ for all $d$ (see \cref{thm:UQ-LQ-isomorphism}).
This can be seen as a generalization of a corresponding statement for one-parameter persistence, in which case $N^\bullet(K_*)$ equals the relative cochain complex $C^\bullet(K, K_*)$ \cite[Theorem~2.4]{deSilvaMorozovEtAl:2011}; see also \cite{BauerSchmahl:2021a}.

Given a (minimal) free resolution $F_\bullet$ of an $n$-parameter persistence module $M$ and a choice of basis for each module of $F_\bullet$, we show that the matrices representing $F_\bullet$ also represent a (minimal) injective resolution of $M$; see \cref{thm:calabi-yau}.
In particular, this allows us to easily convert a (minimal) free resolution of a module (\eg, $H^{d + n}(N^\bullet(K_*))$) to a (minimal) free resolution of its dual (\ie, $H_d(K_*)$); see \cref{thm:calabi-yau for matrices}.

For $n = 2$, we propose a method to compute a \MFR\ of $H^{d + 2}(N^\bullet(K_*))$ (and thus $H_d(K_*))$ solely from the coboundary map $\d^{d+1}\colon N^d(K_*) \to N^{d+1}(K_*)$; see \cref{sec:computing cohomology}.
At the core of this method is an algorithm for the following problem:  given a morphism $f\colon F \to F'$ of free persistence modules and a basis of the vector space $\colim \im f$, compute a basis of the free persistence module $\ker f$;
see \cref{thm:correctness of bireduction algorithm}.
The algorithm is compatible with the clearing optimization, which improves its performance considerably.

We have implemented our approach \cite{Lenzen:2023} and report timing results from computational experiments with function-Rips bifiltrations.
On most instances considered, our approach is significantly faster than the approach \cite{FugacciKerberEtAl:2023} used in \texttt{mpfree}, and on certain instances, our implementation is able to outperform the approach of by a factor of up to 20.

A number of recent methods for computation of multiparameter persistence focus on decreasing the size of the input complex without changing its homology \cite{FugacciKerber:2019a, ScaramucciaIuricichEtAl:2020, FugacciKerberEtAl:2023,AlonsoKerberEtAl:2022}.  These methods can be used as a preprocessing step to the computation of a minimal free resolution.
In our computational experiments, we explore the effect of the chunk preprocessing method of \cite{FugacciKerberEtAl:2023} on the efficiency of our method.
We find that in our experiments, our method generally performs better without this preprocessing.
In contrast, we observe that the preprocessing is very helpful for the the approach of \cite{LesnickWright:2022,KerberRolle:2021},
as previously reported \cite{FugacciKerberEtAl:2023}.
We also observe that applying the chunk algorithm on cochain complexes instead of chain complexes may significantly increase the performance even for homology computation.

\section{Background}
\subsection{Persistence modules}\label{Sec:Pers_Mod}
Let $k$ be a field, let $n \in \N$, let $\vect$ denote the category of finite dimensional $k$-vector spaces, and consider $\Z^n$ as a poset with the usual product partial order.
A (pointwise finite dimensional) \emph{$\Z^n$-persistence module}, also called an \emph{$n$-parameter persistence module}, is a functor $M\colon \Z^n\to \vect$.
The maps $M_{z \leq z'}\colon M_z \to M_{z'}$ are called the \emph{structure maps} of $M$.
If $m\in M_z$, we call $z$ the \emph{grade} of $m$, denoted by $g(m)$.
The \emph{total dimension} of $M$ is $\sum_{z\in \Z^n} \dim M_z$.
We write $\Pers{\Z^n}$ for the abelian category of pointwise finite dimensional $\Z^n$-persistence modules.
Its morphisms are natural transformations.  $\Pers{\Z^n}$ is equivalent to a full subcategory of the category of multigraded modules over the ring $k[x_1, \dotsc, x_n]$; see \cite{CarlssonZomorodian:2009}.
The algebra $k[x_1, \dotsc, x_n]$ is not a principal ideal domain unless $n = 1$; therefore, $\Z^n$-persistence modules cannot be described by a barcode for $n > 1$.

Let $V^* = \Hom_k(V, k)$ denote the dual of a $k$-vector space $V$.
The \emph{dual} of a $\Z^n$-persistence module $M$ is the $\Z^n$-persistence module $M^*$ with $(M^*)_z = (M_{-z})^*$ and $(M^*)_{z \leq z'} = (M_{-z'\leq -z})^*$.
An object $M$ of an abelian category $\mathcal{C}$ is \emph{projective} (respectively \emph{injective}) if the functor $\Hom_{\mathcal{C}}(M, -)$ (respectively $\Hom_{\mathcal{C}}(-, M)$) is exact.
The duality $M \mapsto M^*$ is an exact contravariant equivalence of categories and thus maps projective to injective modules and vice versa.

For $z \in \Z^n$, let $F(z)$ be the module with $F(z)_{w} = \begin{smallcases} k & \text{if $z \leq w$},\\ 0 & \text{otherwise,} \end{smallcases}$ and $F(z)_{w \leq w'} = \id_k$ if $z \leq w \leq w'$.
A module $F$ is \emph{free} if there are elements $(z_i)_{i \in I} \subseteq \Z^n$, for some indexing set $I$, such that there is an isomorphism $b\colon \bigoplus_{i\in I} F(z_i) \to F$.
Every finitely generated projective persistence module is free \cite{Quillen:1976,Suslin:1976,HoppnerLenzing:1981}.
Let $e_i$ denote the element $1 \in F(z_i)_{z_i}$ of the component of $\bigoplus_{i \in I} F(z_i)$ indexed by $i$.
The set $\Set{b(e_i) ; i \in I}$ is a \emph{basis} of $F$.  The multiset $\rk F \coloneqq \Set{z_i ; i \in I}$ is uniquely determined by $F$ and called its \emph{(graded) rank}.
A module $M$ is \emph{finitely generated} if there is a pointwise surjection $F \to M$ from a free module $F$ of finite rank; the image of a basis of $F$ under such a map is called a \emph{generating system} of $M$.

A \emph{graded matrix} $M$ is a matrix with entries $M_{ij} \in k$ whose rows and columns are decorated with row grades $\rg^M_*$ and column grades $\cg^M_*$.
The \emph{graded transpose} of a graded $m\times n$-matrix $M$ is the graded $n\times m$-matrix $M^T$ with entries $(M^T)_{ij} = M_{m+1-j,n+1-i}$, row grades $\rg^{M^T}_i = -\cg^M_{n+1-i}$ and column grades $\cg^{M^T}_j = -\rg^M_{m+1-i}$.
A morphism $f\colon F \to F'$ of finite rank free modules $F$ and $F'$ with respective bases $b_1, \dotsc, b_n$ and $b'_1, \dotsc, b'_m$
is uniquely represented by a graded $m \times n$-matrix $M$ with $\cg^M_j = g(b_j)$, $\rg^M_i = g(b'_i)$, and entries $M_{ij}$ such that $f(b_j) = \sum_i M_{ij} F'_{g(b'_i) \leq g(b_j)} (b'_i)$ for all $j$.

\begin{lemma}
	\label{thm:graded matrices free modules}
	A graded matrix $M$  represents a morphism of finite rank free modules \textiff\ $M_{ij} =0$ whenever $\rg^M_i \not \leq \cg^M_j$.
\end{lemma}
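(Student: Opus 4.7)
The plan is to handle the two directions of the biconditional separately. For the ``only if'' direction, I would observe that the defining equation
\[
f(b_j) \;=\; \sum_i M_{ij}\, F'_{g(b'_i) \leq g(b_j)}(b'_i)
\]
makes literal sense only when each structure map $F'_{g(b'_i) \leq g(b_j)}$ appearing with a nonzero coefficient is actually defined. Since such a structure map exists precisely when $g(b'_i) \leq g(b_j)$, i.e., when $\rg^M_i \leq \cg^M_j$, the entry $M_{ij}$ is forced to vanish whenever this inequality fails; otherwise the right-hand side above would be ill-defined in $F'_{g(b_j)}$.

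For the ``if'' direction, given a graded matrix $M$ satisfying the vanishing condition, I would construct $f\colon F \to F'$ explicitly. At each grade $z \in \Z^n$, the vector space $F_z$ is spanned by the images $F_{\cg^M_j \leq z}(b_j)$ for those $j$ with $\cg^M_j \leq z$. I would set
\[
f_z\bigl(F_{\cg^M_j \leq z}(b_j)\bigr) \;:=\; \sum_i M_{ij}\, F'_{\rg^M_i \leq z}(b'_i),
\]
the hypothesis ensuring that every $i$ with $M_{ij} \neq 0$ satisfies $\rg^M_i \leq \cg^M_j \leq z$, so each summand is defined. Extending linearly yields a family of linear maps whose naturality with respect to the structure maps of $F$ and $F'$ reduces to the functoriality identity $F'_{z \leq z'} \circ F'_{\rg^M_i \leq z} = F'_{\rg^M_i \leq z'}$, which holds tautologically.

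The main obstacle is essentially bookkeeping: one must check that $f_z$ is well-defined on $F_z$ (automatic, since the images of the $b_j$ at grades $\leq z$ form a basis of $F_z$) and that the resulting natural transformation is indeed represented by $M$ in the sense of the preceding paragraph. The latter amounts to specializing the definition of $f_z$ at $z = \cg^M_j$ and recovering the original formula verbatim, so no genuine computation is needed.
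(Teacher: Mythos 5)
Your proof is correct and rests on the same underlying fact the paper invokes, namely that $\Hom(F(z), F(z'))$ equals $k$ when $z \geq z'$ and $0$ otherwise. The paper cites this as a one-liner; you unwind it by constructing the morphism pointwise at each grade and verifying naturality directly, which is a sound, more explicit rendering of the same argument.
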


\begin{proof}
This follows from the fact that $\Hom(F(z), F(z')) = \begin{smallcases}
	k & \text{if $z \geq z',$} \\
	0 & \text{otherwise.}
\end{smallcases}$
\end{proof}

If bases of free modules $F, F'$ are fixed, we identify a morphism $F \to F'$ with the graded matrix representing it.
A \emph{free resolution} (resp., \emph{injective} resolution) of a module $M$ is a chain complex $F_\bullet\colon \dotsb \to F_1 \to F_0$ of free modules (resp., cochain complex $I^\bullet\colon I^0 \to I^1 \to \dotsb$ of injective modules) concentrated in non-negative degrees that is quasi-isomorphic to $M$.
A \emph{(homological) $d$-ball} is an acyclic chain complex of the form $\dotsb \to 0 \to F(z) \stackrel\id\to F(z) \to 0 \to \dotsb$ for some $z\in \Z^n$, concentrated in degrees $d, d-1$.
A free resolution, and, more generally, a chain complex of free modules, is called \emph{minimal} if it contains no direct summand isomorphic to a ball.
An injective resolution is \emph{minimal} if its dual is minimal.
A morphism $F_1\to F_0$ of free modules is called a \emph{(minimal) free presentation} of a module $M$ if it extends to a (minimal) free resolution of $M$.

\begin{theorem}[see {\cite[Theorem~20.2]{Eisenbud:1995}, \cite[Theorem~7.5]{Peeva:2011}}]
	\label{thm:existence of minimal free resolutions}
	Every finitely generated module has a \MFR.
	Every free resolution is isomorphic to the direct sum of a \MFR\ with a direct sum of homological balls.
	In particular, a \MFR\ is unique up to isomorphism of chain complexes.
\end{theorem}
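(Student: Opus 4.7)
The plan is to prove existence, then decomposition, then derive uniqueness as a corollary. The central tool throughout is a graded Nakayama principle: for a finitely generated $M \in \Pers{\Z^n}$ and $z \in \Z^n$, set
\[
	Q_z(M) \coloneqq M_z \bigg/ \sum_{z' < z} \im\bigl(M_{z' \leq z}\bigr).
\]
If $Q_z(M) = 0$ for all $z$, then $M = 0$, as one sees by considering a finite generating system and taking a generator of minimal grade.

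For existence, I would build $F_\bullet$ iteratively. Starting with $M$, pick a $k$-basis of $\bigoplus_z Q_z(M)$ and lift each element at grade $z$ to an element of $M_z$. The resulting minimal generating system yields a surjection $\pi_0\colon F_0 \onto M$ with $F_0$ free of finite rank. Its kernel is again finitely generated, since $\Pers{\Z^n}$ embeds fully into the category of multigraded modules over the Noetherian ring $k[x_1,\dotsc,x_n]$; iterating gives a free resolution. Minimality is ensured by the choice of basis: a ball summand $F(z) \xrightarrow{\id} F(z)$ in degrees $d, d-1$ would force a basis element of $F_{d-1}$ at grade $z$ to lie in $\im \partial_d \subseteq \ker \partial_{d-1}$, but basis elements of $F_{d-1}$ were chosen as lifts of minimal generators of $\im \partial_{d-1}$ (or of $M$ when $d = 1$), so they do not lie in $\ker \partial_{d-1}$.

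For the decomposition, let $F_\bullet$ be the MFR above and $G_\bullet$ an arbitrary free resolution of $M$. By the standard lifting property for free resolutions, there are chain maps $\varphi\colon F_\bullet \to G_\bullet$ and $\psi\colon G_\bullet \to F_\bullet$ lifting $\id_M$, and $\psi\varphi = \id_{F_\bullet} + \partial h + h\partial$ for some chain homotopy $h$. Using that the differentials of $F_\bullet$ have zero matrix entries on basis pairs of equal grade (by minimality) and combining this with \cref{thm:graded matrices free modules}, one checks that the matrix of $(\psi\varphi)_d - \id$ inherits the same property. Ordering the basis by any linear extension of the grade partial order then renders this matrix strictly block upper triangular with zero diagonal blocks, hence nilpotent, so $(\psi\varphi)_d$ is invertible. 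It follows that $\varphi$ is a split monomorphism of chain complexes, giving $G_\bullet \cong F_\bullet \oplus C_\bullet$ with $C_\bullet$ acyclic and free. A separate structural lemma then shows every such $C_\bullet$ is a direct sum of balls. Uniqueness follows by applying the decomposition to two MFRs: neither contains any ball, so they must be isomorphic as chain complexes.

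The main obstacle is the structural lemma stating that every acyclic complex of finite rank free persistence modules is a direct sum of balls. The strategy is to pick a basis element $c \in C_d$ at a minimal grade $z$ with $d$ minimal such that $C_d \neq 0$; by acyclicity $\partial c \neq 0$, and by the graded Nakayama principle combined with the minimality of $z$ and $d$, the class of $\partial c$ in $Q_z(C_{d-1})$ is nonzero. This allows splitting off a ball summand $F(z) \xrightarrow{\id} F(z)$ generated by $c$ and $\partial c$ (after extending $\partial c$ to a basis of $C_{d-1}$), and iterating on a suitably chosen complement. The finite total rank of $C_\bullet$ at each grade ensures termination; the bookkeeping is delicate but routine.
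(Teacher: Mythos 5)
Your overall route — existence via a graded Nakayama principle, the decomposition via the comparison theorem for free resolutions together with a nilpotence argument on $(\psi\varphi)_d - \id$, and a structural lemma reducing acyclic complexes of free modules to direct sums of balls — is the standard textbook approach, and the paper simply cites Eisenbud and Peeva here rather than giving its own proof. The nilpotence step is handled correctly: minimality of $F_\bullet$ together with \cref{thm:graded matrices free modules} forces the entries of $\partial h + h\partial$ to vanish at all index pairs whose row grade is not strictly below the column grade, so after ordering bases by a linear extension of the grade order the matrix of $(\psi\varphi)_d - \id$ is strictly block upper triangular, and $(\psi\varphi)_d$ is invertible.

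The sketch of the structural lemma, however, contains a genuine error. If $d$ is chosen \emph{minimal} with $C_d \neq 0$, then $C_{d-1} = 0$, so $\partial c = 0$ for every $c \in C_d$; acyclicity does not give $\partial c \neq 0$, and there is no ball in degrees $(d, d-1)$ to split off. What acyclicity does give at the minimal degree is that $\partial_{d+1}\colon C_{d+1} \to C_d$ is \emph{surjective}, since $H_d(C_\bullet) = C_d/\im\partial_{d+1} = 0$; because $C_d$ is free, hence projective, this surjection splits, giving $C_{d+1} \cong \ker\partial_{d+1} \oplus C_d$ with $\ker\partial_{d+1}$ again free. This exhibits $C_\bullet$ as the direct sum of the complex $0 \to C_d \xrightarrow{\cong} C_d \to 0$ concentrated in degrees $(d+1, d)$ — itself a direct sum of balls, one per basis element of $C_d$ — and the shorter acyclic free complex $\dotsb \to C_{d+2} \to \ker\partial_{d+1} \to 0$, and one inducts on rank. (Taking $d$ maximal instead, where $\partial_d$ is injective, and arguing that $\partial c$ has nonzero image in $Q_z(C_{d-1})$, can also be made to work, but that nonvanishing is not immediate from minimality of $z$ alone and needs its own argument.) A smaller imprecision in the same spirit appears in your minimality check for the constructed $F_\bullet$: a ball summand need not be aligned with your chosen basis, so it does not directly produce a \emph{basis element} of $F_{d-1}$ in $\im\partial_d$; rather it produces some $w \in (F_{d-1})_z \cap \ker\partial_{d-1}$ with $\bar w \neq 0$ in $Q_z(F_{d-1})$, which is what your choice of basis (making $F_{d-1} \onto \ker\partial_{d-2}$ an isomorphism on $Q_z$) actually rules out.
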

Thus, letting $F_\bullet$ be a \MFR\ of a finitely generated persistence module $M$, the graded ranks $\beta_{q}(M) \coloneqq \rk F_q$, called the \emph{graded Betti numbers} of $M$, are independent of the choice of $F_\bullet$.
\begin{theorem}[{Hilbert's Syzygy theorem \cite[Theorem~15.2]{Peeva:2011}, \cite[Corollary~19.7]{Eisenbud:1995}}]
	\label{thm:syzygy}
	Every $\Z^n$-persistence module has a \MFR\ of length at most $n$.
\end{theorem}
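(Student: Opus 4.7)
My plan is to reduce to the classical Hilbert syzygy theorem for $\Z^n$-graded modules over the polynomial ring $R = k[x_1, \ldots, x_n]$, using the equivalence of $\Pers{\Z^n}$ with a full subcategory of such modules recalled in \cref{Sec:Pers_Mod}. The heart of the argument is to exhibit a short enough resolution of the residue field $k = R/(x_1, \ldots, x_n)$ and then identify the terms of the \MFR\ with graded $\mathrm{Tor}$ groups.

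First, I would construct the Koszul complex on the sequence $x_1, \ldots, x_n$. This is a chain complex of free $\Z^n$-graded $R$-modules concentrated in homological degrees $0, 1, \ldots, n$ whose underlying sequence is acyclic and computes $R/(x_1, \ldots, x_n) = k$ in degree $0$, viewed as a graded module supported at the origin. Hence it is a graded free resolution of $k$ of length exactly $n$. Because $\mathrm{Tor}_q^R(M, k)$ can be computed by tensoring this Koszul resolution with any graded $R$-module $M$, this immediately gives $\mathrm{Tor}_q^R(M, k) = 0$ for all $q > n$.

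Second, I would invoke the standard identification of graded Betti numbers with $\mathrm{Tor}$. Let $F_\bullet$ be an \MFR\ of $M$, which exists by \cref{thm:existence of minimal free resolutions}. Minimality forces every differential of $F_\bullet$ to land in $\mathfrak{m} F_{q-1}$, where $\mathfrak{m} = (x_1, \ldots, x_n)$; otherwise one could split off a summand isomorphic to a ball, contradicting minimality. Tensoring with $k = R/\mathfrak{m}$ therefore annihilates all differentials, so $F_q \otimes_R k \cong \mathrm{Tor}_q^R(M, k)$ as graded $k$-vector spaces. Combined with the Tor vanishing from the previous paragraph, this forces $F_q = 0$ for all $q > n$, and the \MFR\ has length at most $n$.

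The main obstacle is purely technical: the classical statements in \cite{Eisenbud:1995,Peeva:2011} concern finitely generated graded modules, whereas the theorem as stated ranges over all pointwise finite-dimensional $\Z^n$-persistence modules. However, both the Koszul computation and the vanishing-of-differentials step only use the graded structure and the maximal ideal $\mathfrak{m}$, so they carry over without change once an \MFR\ is known to exist. I do not foresee any deeper obstruction.
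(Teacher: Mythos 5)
The paper does not give a proof of this theorem; it is stated as a citation to Peeva and Eisenbud, and both of those references prove it exactly as you do — via the length-$n$ Koszul resolution of $k$, the identification $F_q \otimes_R k \cong \mathrm{Tor}_q^R(M,k)$ for a minimal free resolution $F_\bullet$, and the resulting vanishing for $q > n$. So your argument is correct and coincides with the standard proof in the cited sources.

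One small point deserves more care than you give it. You note that the classical references assume $M$ is finitely generated and assert that the argument ``carries over without change once an \MFR\ is known to exist.'' That is true but shifts the real issue: the paper's \cref{thm:existence of minimal free resolutions} only guarantees \emph{existence} and uniqueness of an \MFR\ for finitely generated modules, and for a general pointwise finite-dimensional but non-finitely-generated $\Z^n$-persistence module there is no reason a minimal free resolution exists at all (for instance, a module supported on an infinite antichain-free ray with all structure maps zero is pointwise finite-dimensional yet not finitely generated). Your Koszul/Tor argument faithfully proves the length bound \emph{for modules admitting an \MFR}, i.e., the finitely generated case, which is presumably what the theorem intends and what the citations cover; but the sentence ``I do not foresee any deeper obstruction'' glosses over the fact that the statement as literally written is broader than what is actually being established. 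Either restrict to finitely generated modules or supply a separate argument for existence of minimal resolutions in the pointwise finite-dimensional setting.
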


\subsection{Filtrations}\label{Sec:Filtered}
For $P$ any poset, a (simplicial) $P$-\emph{filtration} is a functor $K_*$ from $\Z^n$ to the category of simplicial complexes such that the simplicial maps $K_{z \leq z'}$ are inclusions.
We write $K = \bigcup_{z \in \Z^n} K_z$.
\begin{example}
	Let $S$ be a metric space and let $\diam \sigma = \max_{s, t \in \sigma} d(s, t)$ for every finite, non-empty $\sigma \subseteq S$.
	The \emph{Vietoris--Rips filtration} $\widehat{\VR}_*(S)$ associated to $S$ is the $\R$-filtration given by $\widehat{\VR}_r(S) = \Set{\sigma \subseteq S; 0 < \abs{\sigma} < \infty, \diam \sigma \leq r}$.
	If $S$ is finite and non-empty, let $r_1 < r_2 < \dotsb < r_n$ be the distinct values $\diam \sigma$ can attain for $\sigma \subseteq S$.
	By setting \[
		\VR_i(S) = \begin{smallcases}
			\widehat{\VR}(S)_{r_1} & \text{if $i \leq 1$,}\\
			\widehat{\VR}(S)_{r_i} & \text{if $1 <i < n$,}\\
			\widehat{\VR}(S)_{r_n} & \text{if $n \leq i$,}\\
		\end{smallcases}
	\]
	we obtain a $\Z$-filtration $\VR_*(S)$, which we also call a Vietoris--Rips filtration.
\end{example}

If $K_*$ is a $\Z^n$-filtration, its absolute and relative simplicial chains $C_\bullet(K_*)$ and $C_\bullet(K, K_*)$ (with coefficients in $k$) form chain complexes of $\Z^n$-persistence modules, and the respective cycles $Z_d(-)$, boundaries $B_d(-)$ and homology $H_d(-)$ are $\Z^n$-persistence modules for all $d$.
The dual cochain complex $C^\bullet \coloneqq (C_\bullet)^*$ of a chain complex $C_\bullet$ has components $C^d = (C_d)^*$.
Its cocycles $Z^d(C^\bullet)$, coboundaries $B^d(C^\bullet)$ and cohomology $H^d(C^\bullet)$ are $\Z^n$-persistence modules for all $d$.
Because $(-)^*$ is exact, there is a natural isomorphism $H^d(C^\bullet) \to H_d(C_\bullet)^*$ for all $d$.
Our indexing convention is that a chain complex $C_\bullet$ has the boundary morphisms $\partial_d\colon C_d \to C_{d-1}$ and a cochain complex $C^\bullet$ has coboundary morphisms $\d^d = (\partial_d)^*\colon C^{d-1} \to C^d$.
This differs from the standard convention, but is chosen such that $C^\bullet = (C_\bullet)^*$ has $\d^d = (\partial_d)^*$.

A $\Z^n$-filtration $K_*$ is \emph{one-critical} if for every $\sigma \in K$ the set $\Set{z \in \Z^n; \sigma \in K_z}$ has a unique minimal element $g(\sigma)$, called the \emph{grade} of $\sigma$.
In this case, $C_\bullet(K_*) = \bigoplus_{\sigma \in K_*} F(g(\sigma))$ is free,
with a basis $\Set{e_\sigma; \sigma \in K_*}$ satisfying $g(e_\sigma) = g(\sigma)$.
In particular, $\partial_\bullet$ can be represented by a graded matrix $[\partial_\bullet]$.

\begin{example}
	\label{def:function-Rips}
	The \emph{function-Rips bifiltration} $\VR^f_*(S)$ associated to a finite metric space $S$ and a function $f\colon S \to \Z$ is the one-critical $\ZZ$-filtration with $\VR^f_{x,y} = \Set{ \sigma \in \VR_y(S); \max_{s \in \sigma} f(s) \leq x}$.
 \cref{fig:sample S1} illustrates the Hilbert function and graded Betti-numbers of the \PH\ of a function-Rips bifiltration, where the function is a density function.
\end{example}

\begin{figure}
	\centering
		\includegraphics[scale=0.75]{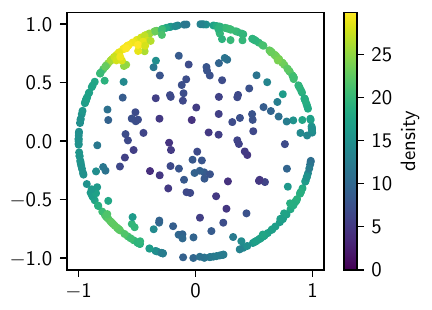}\hfill
		\includegraphics[scale=0.75]{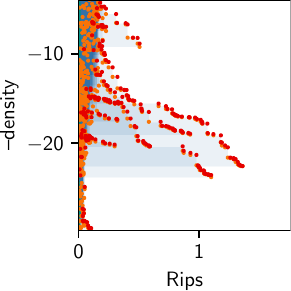}\hfill
		\includegraphics[scale=0.75]{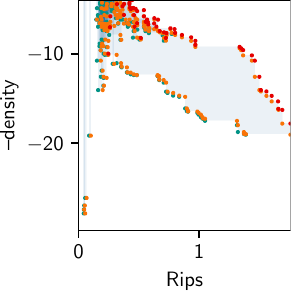}
	\caption[]{%
		A point set $S$ (left) with $\abs{S}=400$, 
		with density function $\rho(p) \coloneqq \sum_{q \in S \setminus \{p\}} \exp\bigl(-\frac{d(p, q)^2}{2\sigma^2}\bigr)$ for $\sigma = 0.15$;
		graded Betti numbers (teal: $\beta_0$, red: $\beta_1$, orange: $\beta_2$) and Hilbert function (shades of blue increasing from dim = $0$ to dim $\geq 10$)
		of $H_0(\VR^\rho_*(S))$ (middle) and $H_1(\VR^\rho_*(S))$ (right).
	}
	\label{fig:sample S1}
\end{figure}
\subsection{One-parameter persistence and clearing}
We next turn attention to persistence modules over $\Z$.  For $-\infty \leq b_i < d_i \leq \infty$, let $I(b, d)$ be the \emph{interval module} with $I(b, d)_z = \begin{smallcases}k & \text{if $b \leq z < d$,} \\ 0 & \text{otherwise}\end{smallcases}$ and $I(b, d)_{z \leq z'} = \id_k$ if $b \leq z \leq z' < d$.
Every pointwise finite-dimensional $\Z$-persistence module is isomorphic to an essentially unique direct sum $\bigoplus_{i \in I} I(b_i, d_i)$ \cite{Webb:1985,ZomorodianCarlsson:2005}.
The collection of the pairs $(b_i,d_i)$ is called the \emph{barcode} of $M$.

Given a finite $\Z$-filtered complex $K_*$, one is usually interested in computing the barcode of $H_\bullet(K_*)$.
Since $k[x]$ is a principal ideal domain, the submodules $Z_d(K_*), B_d(K_*) \subseteq C_d(K_*)$ are free for all $d$.
The \emph{standard algorithm} \cite[\S 3]{EdelsbrunnerHarer:2008} computes bases of $Z_d(K_*)$ and $B_d(K_*)$ and thus the barcode of $H_\bullet(K_*)$ by applying an order-respecting Gaussian column reduction scheme to each graded matrix $[\partial_d]$.
Each relative cochain module $C^d(K, K_*)$ is also free, so the same algorithm computes the barcode of $H^\bullet(K, K_*)$ from the graded matrices $[\d^d] = [\partial_d]^T$ representing the coboundary operators $\d^d$.
The barcodes of $H_\bullet(K_*)$ and $H^{\bullet}(K, K_*)$ determine each other in a simple way, as is seen by considering the long exact sequence of the pair $(K, K_*)$ \cite{deSilvaMorozovEtAl:2011, BauerSchmahl:2021a}.

It has been observed that for Vietoris--Rips filtrations, computing $H^{\bullet}(K, K_*)$ instead of $H_\bullet(K_*)$ is far more efficient.
This increase in efficiency hinges on the use of the \emph{clearing} optimization scheme \cite{BauerKerberEtAl:2014b, ChenKerber:2011, Bauer:2019}, which we now explain.
The \emph{pivot} of a matrix column is the largest row index of a non-zero entry in that column.
The standard algorithm applies left-to-right column additions to bring $[\d^{d+1}]$ into \emph{reduced}
form $R^{d+1}$, meaning that all columns of $R^{d+1}$ have pairwise distinct pivots.
If a column $R^d_j$ is non-zero with pivot $i$, then $R^{d+1}_i = 0$.
Therefore, if $R^d$ is known from previous computations, the reduction of $[\d^{d+1}]_j$ to zero can be skipped.
As the standard algorithm would typically spend most of its run time on the columns of $[\d^{d+1}]$ that are reduced to zero,
skipping most of these accelerates the algorithm considerably.

If the reduced homology $H_d(K)$ is zero for all $d$, then the long exact sequence of the pair $(K, K_*)$ shows that $H^{d+1}(K, K_*)^* \cong H_d(K_*)$ for all $d$, so one would expect that they can be computed from the same data.
Indeed, one can use clearing to compute $H^{d+1}(K, K_*)$ (respectively $H_d(K_*)$) from $\d^{d+1}$ (respectively $\partial_{d+1}$) alone;
see \cref{algo:1D persistence clearing}.


\subsection{Computation of 2-parameter persistence}

\subparagraph*{The LW-Algorithm}
\label{sec:lw-algorithm}
Assume that $C_\bullet$ is a chain complex of free $\ZZ$-persistence modules of finite rank;
\eg, $C_\bullet=C_\bullet(K_*)$ for a one-critical $\ZZ$-filtration $K_*$,
and let $D_d$ be the matrix representing $\partial_d\colon C_d \to C_{d-1}$ for all $d$.
\Cref{thm:existence of minimal free resolutions,thm:syzygy} imply that the kernel of a morphism of finitely generated free $\Z^2$-modules is free.
In particular, $Z_d(C_\bullet)$ is free for all $d$, so the sequence
\(
	0 \to Z_{d+1}(C_\bullet) \xto{i_{d+1}} C_{d+1} \xto{p_{d+1}} Z_d(C_\bullet)
\)
is a free resolution of $H_d(K_*)$.
From $D_d$, the \emph{LW-Algorithm} \cite{LesnickWright:2022,KerberRolle:2021} (see \cref{alg:lw-algorithm}) computes a graded matrix $I_d$ representing $i_d\colon Z_d(C_\bullet) \hookrightarrow C_d$.
A variant of that algorithm (see \cref{alg:ker-mgs}) computes from $D_{d+1}$ a graded matrix $D'_{d+1}\colon C'_{d+1} \to C_d$,
whose columns represent a minimal generating system of $B_d(C_\bullet)$,
together with a graded matrix $I'_{d+1}$ representing the kernel $Z'_{d+1}$ of the morphism represented by $D'_{d+1}$.
There is a unique graded matrix $P'_{d+1}$ such that $D'_{d+1} = I_d P'_{d+1}$, which can be obtained by \cref{algo:factorize}.
Then $I'_{d+1}$ and $P'_{d+1}$ represent a free resolution
\begin{equation}
	\label{eq:homology free resolution}
	0 \to Z'_{d+1} \xto{I'_{d+1}} C'_{d+1} \xto{P'_{d+1}} Z_d(C_\bullet)
\end{equation}
 of $H_d(C_\bullet)$.
To obtain a \MFR, it remains to split off summands from \eqref{eq:homology free resolution} that are isomorphic to homological balls.
There is an embarrassingly parallel algorithm that computes a minimal chain complex quasi-isomorphic to a given one; see \cref{rmk:minimization}.
In particular, this algorithm can be used to convert a free resolution to a minimal one.
It can also be used to split off balls from the input complex $C_\bullet$.
This is known as \emph{chunk preprocessing} and typically improves performance of the LW-algorithm by a considerable amount
\cite{FugacciKerber:2019a,KerberRolle:2021,FugacciKerberEtAl:2023}.

\section{Cohomology computation}
Let $K_*$ be a one-critical $\Z^n$-filtration.
If $n > 1$, then neither $C^\bullet(K_*)$ nor $C^\bullet(K, K_*)$ are complexes of free modules.
Since the LW-algorithm assumes that the input complex is a complex of free modules, the strategy from \cref{sec:lw-algorithm} cannot be used to compute $H^\bullet(K_*)$ or $H^\bullet(K, K_*)$ directly.
Instead, we consider  a cochain complex $N^\bullet(K_*)$ that can be used to compute $H_d(K_*)$.

\subsection{The free cochain complex \texorpdfstring{$N^\bullet(K_*)$}{C*(K⁎)}}
For a module $M$ and $z \in \Z^n$, let $M\Shift{z}$ be the module with graded components $M\Shift{z}_w = M_{z+w}$.
For $z \geq 0$, the structure maps of $M$ give a morphism $M \to M\Shift{z}$.
Note that $M\Shift{z}^* = M^*\Shift{-z}$.
For a graded matrix $A$, let $A\Shift{z}$ be the graded matrix with $A \Shift{z}_{ij} = A_{ij}$, $\rg^{A\Shift{z}}_i = \rg^A_i + z$ and $\cg^{A\Shift{z}}_j = \rg^A_j + z$ for all $i,j$.

Fix a total order on the simplices of $K_*$, so that
the boundary map $\partial_\bullet$ of the chain complex $C_\bullet(K_*) = \bigoplus_{\sigma \in K_*} F(g(\sigma))$ is represented by the graded matrix $[\partial_\bullet]$.
Let $\epsilon = (1,\dotsc,1) \in \Z^n$.
Let \[N^\bullet(K_*) = \bigoplus_{\sigma} F(-g(\sigma)+\epsilon)\] be the cochain complex whose
coboundary operator $\d_{N}^\bullet$ is represented by $[\partial_\bullet]^T\Shift{-\epsilon}$ \wrt\ the standard basis.
It follows from \cref{thm:graded matrices free modules} that this is a well-defined cochain complex.
The key property of this chain complex is summarized in the following proposition, whose proof is deferred to the next subsection.

\begin{proposition}
	\label{thm:UQ-LQ-isomorphism}
	If $H_d(K_*)$ has finite total dimension for all $d$,
	then there is a natural isomorphism $H_d(K_*) \cong H^{d+n}(N^\bullet(K_*))^*$ for all $d$.
\end{proposition}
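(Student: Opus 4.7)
The plan is to identify $N^\bullet(K_*)$ pointwise with a relative simplicial cochain complex and then compute its cohomology by standard tools. For each $w \in \Z^n$, the graded component $N^d_w$ is spanned by those $d$-simplices $\sigma$ with $-g(\sigma) + \epsilon \leq w$, equivalently, $g(\sigma)_i > -w_i$ for every $i \in \{1, \dotsc, n\}$. Introducing the subcomplexes $K^i_w = \{\sigma \in K : g(\sigma)_i \leq -w_i\}$ (which place no constraint on the other coordinates) and setting $B_w = \bigcup_{i=1}^n K^i_w$, this is exactly the condition $\sigma \notin B_w$. Thus $N^\bullet_w$ is naturally identified with the relative cochain complex $C^\bullet(K, B_w)$; the defining formula $[\partial_\bullet]^T\Shift{-\epsilon}$ for the coboundary exhibits the identification as compatible with differentials, and since $w \leq w'$ implies $B_w \supseteq B_{w'}$, the structure maps in $N^\bullet$ correspond to the inclusions $C^\bullet(K, B_w) \hookrightarrow C^\bullet(K, B_{w'})$.

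Next I would exploit the finite-total-dimension hypothesis. For each proper subset $S \subsetneq \{1, \dotsc, n\}$, the intersection $K^S_w = \bigcap_{i \in S} K^i_w$ is a filtered colimit of complexes $K_z$ over $z$ with $z_i = -w_i$ for $i \in S$ and $z_i \to \infty$ for $i \notin S$. Were $H_d(K^S_w)$ nonzero, then by stabilisation $\dim H_d(K_z)$ would be bounded below by this nonzero value for all sufficiently large choices of the free coordinates, producing infinitely many contributions to $\sum_z \dim H_d(K_z)$ and contradicting the hypothesis. Hence $H_*(K^S_w) = 0$ for every $S$ with $|S| < n$, and in particular $H_*(K) = 0$. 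The only full-size intersection is $\bigcap_{i=1}^n K^i_w = K_{-w}$.

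With this vanishing in hand, the Mayer--Vietoris spectral sequence for the cover $\{K^i_w\}_{i=1}^n$ of $B_w$ has first page $E_1^{p,q} = \bigoplus_{|S| = p+1} H^q(K^S_w)$ concentrated in the single column $p = n - 1$, where it equals $H^q(K_{-w})$. It therefore collapses, giving $H^m(B_w) \cong H^{m-n+1}(K_{-w})$. Combined with the long exact sequence of the pair $(K, B_w)$ and the vanishing of $H^*(K)$, this yields $H^{d+n}(K, B_w) \cong H^{d+n-1}(B_w) \cong H^d(K_{-w})$, naturally in $w$. Since all spaces involved are finite-dimensional, $H^d(K_{-w}) \cong H_d(K_{-w})^*$, and unravelling the dual-module convention $(M^*)_w = (M_{-w})^*$ converts the natural isomorphism $H^{d+n}(N^\bullet(K_*))_w \cong H_d(K_{-w})^*$ into the desired $H_d(K_*) \cong H^{d+n}(N^\bullet(K_*))^*$.

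The main obstacle I anticipate is the careful bookkeeping of grading: the shift by $\epsilon$ built into $N^\bullet$, the graded transpose of the boundary matrix, the sign-flip $w \mapsto -w$ in the dual-module convention, and the degree shifts arising from both the long exact sequence and Mayer--Vietoris must all cohere consistently. A secondary, more technical point is making the vanishing argument for $H_*(K^S_w)$ when $|S| < n$ fully rigorous; this is most cleanly handled by induction on $n - |S|$, at each step using that a persistence module of finite total dimension indexed by a positive-dimensional orthant must stabilise to zero toward infinity.
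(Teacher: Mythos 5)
Your argument is correct in substance and reaches the result, but it takes a noticeably different route than the paper. The paper deduces Proposition~\ref{thm:UQ-LQ-isomorphism} as a one-line consequence of the more general Calabi--Yau Theorem~\ref{thm:calabi-yau}, which establishes a quasi-isomorphism $\nu F_\bullet[n]\Shift{\epsilon}\simeq F_\bullet$ for \emph{any} complex of free modules with homology of finite total dimension. That theorem is proved algebraically: one resolves each injective $I(z)\Shift\epsilon$ by a Koszul-type exact sequence $0\to F(z)\to K_1F(z)\to\dotsb\to K_nF(z)\to I(z)\Shift\epsilon\to 0$ built from the partial-colimit functors $\Colim_S$, and then shows that the auxiliary terms $K_kF_\bullet$ are acyclic under the finiteness hypothesis. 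You instead work pointwise in the grade $w$, identifying $N^\bullet(K_*)_w$ with a genuine relative cochain complex $C^\bullet(K,B_w)$, proving the acyclicity of the intermediate intersections $K^S_w$ (for $\lvert S\rvert<n$) by a stabilisation argument, and then invoking the Mayer--Vietoris spectral sequence for the cover $\{K^i_w\}$ of $B_w$ together with the long exact sequence of the pair $(K,B_w)$. In fact these are two views of the same underlying resolution: the pointwise fiber at $w$ of the Koszul complex $K_\bullet$ in the paper's proof is precisely your Mayer--Vietoris cochain complex, with $\Colim_S C_\bullet(K_*)$ evaluated at $w$ corresponding to $C_\bullet(K^{S^c}_w)$. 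What your route buys is a more geometric, concrete picture of why the isomorphism holds, and an explicit identification generalizing the $n=1$ relative-cochain description of \cite{deSilvaMorozovEtAl:2011}. What the paper's route buys is greater generality and reusability: Theorem~\ref{thm:calabi-yau} and Lemma~\ref{thm:nakayama on matrices} are also needed for Corollaries~\ref{thm:calabi-yau for modules}, \ref{thm:calabi-yau for matrices} and Lemma~\ref{thm:extend resolution to right}, and the module-level formulation sidesteps delicate bookkeeping with the augmentation and the grade of the empty simplex, which your reduced-(co)homology Mayer--Vietoris argument would need to treat carefully (your proposal silently mixes reduced and unreduced conventions, e.g.\ in asserting the MV spectral sequence collapses and in handling the edge case $K_{-w}=\emptyset$). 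These are fixable details, not conceptual gaps, but they are real.
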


\begin{corollary}
	\label{thm:free resolution dualizes to injective resolution}
	If $H_\bullet(K_*)$ has finite total dimension and
	$F_\bullet$ is a free resolution of $H^{d+n}(N^\bullet(K_*))$,
	then $(F_\bullet)^*$ is an injective resolution of $H_d(K_*)$.
\end{corollary}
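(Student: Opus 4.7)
\medskip

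The plan is to deduce the corollary directly from \cref{thm:UQ-LQ-isomorphism} by applying the exact contravariant duality functor $(-)^*$ to the given free resolution. The proposition supplies the isomorphism $H_d(K_*) \cong H^{d+n}(N^\bullet(K_*))^*$, and the corollary is essentially a formal consequence once one checks that dualization converts free resolutions into injective ones.

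First I would unpack what it means for $F_\bullet$ to be a free resolution: by definition, there is a quasi-isomorphism $F_\bullet \to H^{d+n}(N^\bullet(K_*))$, where the right-hand side is viewed as a chain complex concentrated in degree $0$, and each $F_q$ is a free object of $\Pers{\Z^n}$ (hence pointwise finite-dimensional, so $(F_q)^*$ is well-defined). Next, I would invoke the two facts about the duality functor recorded in \cref{Sec:Pers_Mod}: that $(-)^* \colon \Pers{\Z^n} \to \Pers{\Z^n}^{\op}$ is an exact contravariant equivalence of categories, and that it swaps projective and injective objects. Exactness immediately implies that $(-)^*$ sends quasi-isomorphisms to quasi-isomorphisms; swapping projectives and injectives implies that each $(F_q)^*$ is injective, since every free module is projective.

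Combining these observations, the dualized map
\[
	H^{d+n}(N^\bullet(K_*))^* \longrightarrow (F_\bullet)^*
\]
is a quasi-isomorphism from $H^{d+n}(N^\bullet(K_*))^*$ (viewed in degree $0$) to the cochain complex $(F_\bullet)^*$ of injective modules, concentrated in non-negative cohomological degrees. Composing with the natural isomorphism $H_d(K_*) \cong H^{d+n}(N^\bullet(K_*))^*$ from \cref{thm:UQ-LQ-isomorphism} then yields an injective resolution of $H_d(K_*)$, as claimed.

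I do not expect a substantial obstacle here; the only thing that needs attention is the bookkeeping for the pointwise-finite-dimensionality hypothesis. The assumption that $H_\bullet(K_*)$ has finite total dimension is what allows \cref{thm:UQ-LQ-isomorphism} to be invoked, and the implicit convention that resolutions live in $\Pers{\Z^n}$ ensures that $(-)^*$ behaves well on every $F_q$. Once this is noted, the rest of the argument is entirely formal.
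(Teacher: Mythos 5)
Your proof is correct and takes essentially the same approach the paper intends for this corollary (which the paper states without explicit proof, immediately following \cref{thm:UQ-LQ-isomorphism}): dualize the given free resolution using that $(-)^*$ is an exact contravariant equivalence swapping projectives and injectives, then compose with the natural isomorphism $H_d(K_*) \cong H^{d+n}(N^\bullet(K_*))^*$ supplied by \cref{thm:UQ-LQ-isomorphism}. The bookkeeping you flag — that free modules are projective and that exactness carries quasi-isomorphisms across the duality — is exactly the right level of detail to make the argument complete.
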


\subsection{The Calabi--Yau-property of persistence modules}
Besides proving \cref{thm:UQ-LQ-isomorphism}, we will need to convert the injective resolution of $H_d(K_*)$ from \cref{thm:free resolution dualizes to injective resolution} into a free resolution of $H_d(K_*)$.
Both will follow from \cref{thm:calabi-yau}, which establishes a property of persistence modules known as the \emph{Calabi--Yau property} in some areas of algebra \cite{Ginzburg:2007}; see \cite[Lemma~4.1]{Keller:2008} for a proof in a more general context.
As it turns out, there is a close correspondence between injective and free resolutions that we explore in this section.
For $z \in \Z^n$, we define the injective module $I(z) = F(-z)^*$; \ie,
$I(z)_w = \begin{smallcases} k & \text{if $w \leq z$,} \\ 0 & \text{otherwise,} \end{smallcases}$ and $I(z)_{w \leq w'} = \id$ if $w \leq w' \leq z$.

\begin{definition}
	For persistence modules $M, N$, let $\IHom(M, N)$ be the persistence module with components $\IHom(M, N)_z = \Hom(M, N\Shift{z})$.
\end{definition}

Let $\Proj{\Z^n}$ and $\Inj{\Z^n}$ be the full subcategories of $\Pers{\Z^n}$ consisting of free and injective modules, respectively.
\begin{lemma}[see {\cite[Proposition~2.10 in Chapter III]{AssemSimsonEtAl:2006}}]
	\label{thm:nakayama functor}
	The \emph{Nakayama functor}
	\[\nu \coloneqq \IHom(-, F(0))^*\colon \Proj{\Z^n} \to \Inj{\Z^n}\]
	is an equivalence of categories with quasi-inverse $\nu^{-1} = \IHom(I(0)^*, -)$.
\end{lemma}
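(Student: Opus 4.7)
The plan is to verify the equivalence by computing both functors on the indecomposable generators of $\Proj{\Z^n}$ and $\Inj{\Z^n}$, namely the modules $F(z)$ and $I(z)$ for $z \in \Z^n$, and then to extend by additivity. On the atom $F(z)$, the identity $F(0)\Shift{w} = F(-w)$ together with $\Hom(F(z), F(-w)) = k$ iff $w \geq -z$ gives $\IHom(F(z), F(0)) = F(-z)$; dualizing yields $\nu(F(z)) = F(-z)^* = I(z)$, so $\nu$ does land in $\Inj{\Z^n}$. Dually, identifying $I(0)^*$ and unwinding the natural-transformation conditions that define $\Hom(I(0)^*, I(y)\Shift{z})$, one verifies $\nu^{-1}(I(y)) = F(y)$, so $\nu^{-1}$ lands in $\Proj{\Z^n}$.

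Next, I would argue that since $\IHom(-, F(0))$ and $(-)^*$ are additive (the first taking direct sums in its first argument to direct products, which the dual converts back to direct sums under the pointwise-finite-dimensional hypothesis), the atomic computation extends to $\nu(\bigoplus_i F(z_i)) \cong \bigoplus_i I(z_i)$, and analogously for $\nu^{-1}$. Full faithfulness then reduces by additivity to showing that the induced map $\Hom(F(z), F(z')) \to \Hom(\nu F(z), \nu F(z'))$ is an isomorphism: both $\Hom$-spaces equal $k$ precisely when $z' \leq z$, and a direct check confirms the generator maps to the generator. Essential surjectivity is immediate once every object of $\Inj{\Z^n}$ is decomposed as a direct sum of shifted copies of $I(0)$, each of which equals $\nu(F(z))$ for some $z$.

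Finally, the natural isomorphisms $\nu^{-1}\nu \cong \id_{\Proj{\Z^n}}$ and $\nu\nu^{-1} \cong \id_{\Inj{\Z^n}}$ are assembled from the atomic identifications, with naturality checked on morphisms between generators; the categorical additive closure then propagates naturality to arbitrary free and injective modules. The main obstacle is the bookkeeping for the interplay between the two contravariant functors $\IHom(-, F(0))$ and $(-)^*$ (whose composition $\nu$ is covariant) and the shift $\Shift{-}$, and for handling possibly infinite direct sums consistently in the pointwise-finite-dimensional setting; the cited representation-theoretic template provides the blueprint, which one translates into the graded $\Z^n$-persistence setting with $F(0)$ playing the role of the algebra.
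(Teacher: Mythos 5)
The paper gives no proof of this lemma, citing a representation-theoretic source, so there is no in-paper argument to compare against. Your strategy --- compute $\nu$ and its candidate inverse on the indecomposables $F(z)$ and $I(z)$, extend additively, then check full faithfulness and essential surjectivity --- is the standard route, and your computation $\nu F(z) = I(z)$ is correct.

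However, the step you dismiss as ``one verifies $\nu^{-1}(I(y)) = F(y)$'' does not actually go through for the formula $\nu^{-1} = \IHom(I(0)^*, -)$ as written. Since $(I(0)^*)_w = (I(0)_{-w})^* = k$ exactly when $w \geq 0$, one has $I(0)^* = F(0)$, and therefore
\[
\IHom(I(0)^*, I(y))_w = \Hom\bigl(F(0), I(y)\Shift{w}\bigr) = \Hom\bigl(F(0), I(y-w)\bigr),
\]
which is $k$ precisely when the supports of $F(0)$ and $I(y-w)$ meet, i.e.\ when $y-w \geq 0$, i.e.\ $w \leq y$. That describes $I(y)$, not $F(y)$, so with this formula $\nu^{-1}\nu$ would not be the identity. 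The formula that does produce $F(y)$ is $\IHom(I(0), -)$, since $\IHom(I(0), I(y))_w = \Hom(I(0), I(y-w)) = k$ iff $0 \geq y-w$ iff $w \geq y$; equivalently one may write $\IHom(F(0)^*, -)$, which matches the classical $\Hom_A(DA, -)$ with $A = F(0)$ and $DA = I(0)$. Your proof needs to either work with this corrected formula or explicitly note that the one in the statement cannot serve as a quasi-inverse; as written, the crucial atomic verification silently fails. The remaining steps you outline (additivity, full faithfulness via $\Hom(F(z),F(z')) \cong \Hom(I(z),I(z'))$, essential surjectivity from the decomposition of injectives into indecomposables) are sound once this is repaired.
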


One checks that $\nu F(z) = I(z)$ and $\nu^{-1} I(z) = F(z)$.
Therefore, $N^\bullet(K_*) = (\nu C_\bullet(K_*) \Shift\epsilon)^*$.
For a chain complex $C_\bullet$ and $i \in \Z$, let $C_\bullet[i]$ be the chain complex whose $d$th module is $(C_\bullet[i])_d = C_{i+d}$.
Analogously, for a cochain complex $C^\bullet$, let $C^\bullet[i]$ be the cochain complex with $(C^\bullet[i])^d = C^{i+d}$.
Note that $C_\bullet[i]^* = (C_\bullet)^*[i]$.

\begin{theorem}
	\label{thm:calabi-yau}
	If $F_\bullet$ is a complex of free $\Z^n$-persistence modules such that $H_d(F_\bullet)$ has finite total dimension for all $d$, then $F_\bullet$ and $\nu F_\bullet[n]\Shift{\epsilon}$ are naturally quasi-isomorphic.
\end{theorem}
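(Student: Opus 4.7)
The plan is to establish the natural quasi-isomorphism in three steps: (i) reduce to a bounded minimal complex; (ii) construct a natural comparison map via Koszul self-duality of the polynomial ring; (iii) verify that the comparison is a quasi-isomorphism using the finite total dimension hypothesis.

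For (i), I would use \cref{thm:existence of minimal free resolutions} (extended from resolutions to arbitrary chain complexes of free modules) to decompose $F_\bullet$ as the direct sum of a minimal complex $F^{\min}_\bullet$ and a sum of homological balls. The Nakayama equivalence \cref{thm:nakayama functor} sends projective balls to injective balls, both of which are contractible, so the balls split off on both sides of the claimed quasi-iso and can be discarded. Under the finite total dimension hypothesis together with \cref{thm:syzygy}, the remaining minimal complex must have finite total rank and be concentrated in finitely many homological degrees, reducing the problem to a bounded, finite-rank setting.

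For (ii), I would construct the comparison using Koszul duality. For a single summand $F(z)$ of $F_\bullet$, the Koszul complex over $R = k[x_1,\dots,x_n]$ simultaneously gives a minimal free resolution of length $n$ of the point module at $z+\epsilon$ and, dually, a minimal injective coresolution of the point module at $z$; the two are interchanged by $\nu[n]\Shift{\epsilon}$, since $\nu F(z)\Shift{\epsilon} = I(z-\epsilon)$. Combining these per-summand Koszul resolutions with the differentials of $F_\bullet$ yields a natural zigzag between $F_\bullet$ and $\nu F_\bullet[n]\Shift{\epsilon}$. Abstractly, this is the specialization of Keller's general Calabi--Yau principle \cite[Lemma~4.1]{Keller:2008} to the homologically smooth $\Z^n$-graded polynomial ring $R$, whose canonical module is $R\Shift{-\epsilon}$ placed in cohomological degree $n$.

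For (iii), I would verify that the comparison induces isomorphisms on homology by a pointwise check: at each grade $z$, both complexes are bounded complexes of finite-dimensional $k$-vector spaces and the claim reduces to an elementary exactness statement for the Koszul complex, while the finite-total-dimension hypothesis localizes the nontrivial homology of $F_\bullet$ to finitely many grades. The main obstacle is that the statement genuinely requires the finite-total-dimension hypothesis---for instance, $F_\bullet = F(z)$ alone is not quasi-iso to $\nu F_\bullet[n]\Shift{\epsilon}$, which lives in a different homological degree. Pinning down precisely how the hypothesis forces the homological shift to cancel is the content of Gorenstein/Matlis duality for the polynomial ring; writing it cleanly is most efficient by invoking Keller's lemma once the smoothness and Calabi--Yau conditions for the $\Z^n$-graded ring $R$ are verified, together with a diagram-chase to promote the derived-category identification to a naturally quasi-isomorphic zigzag of concrete chain maps.
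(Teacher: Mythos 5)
Your plan identifies the right circle of ideas---the Calabi--Yau property, Keller's lemma, and a Koszul-type complex---and the paper even cites \cite[Lemma~4.1]{Keller:2008} as giving the statement in greater generality. But there are two genuine problems with the proposal as written.

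First, step (i) is incorrect. The hypothesis is that $H_d(F_\bullet)$ has finite total dimension \emph{for each} $d$, not that $\bigoplus_d H_d(F_\bullet)$ is finite-dimensional. Consequently $F_\bullet$ may have nonzero homology in infinitely many homological degrees, in which case no amount of splitting off balls produces a bounded minimal complex, and Hilbert's Syzygy theorem (\cref{thm:syzygy}) gives no bound on the homological width of $F_\bullet$. (In fact the paper's own proof quietly says ``Let $F_\bullet$ be a bounded complex''; what saves the argument there is that the construction is entirely degreewise, so boundedness is not actually used---whereas your reduction argument relies on boundedness as a conclusion it cannot reach.)

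Second, step (ii) misdescribes the central object. What the paper calls the ``Koszul complex'' of $F(z)$ is not the usual Koszul resolution of a point module: it is the \v{C}ech-type exact sequence
\[
0 \to F(z) \to K_1 F(z) \to \dotsb \to K_n F(z) \to I(z)\Shift{\epsilon} \to 0,
\qquad K_k F(z) = \bigoplus_{\abs{S}=k} \Colim_S F(z),
\]
whose intermediate terms $\Colim_S F(z)$ are partial colimits (effectively localizations at subsets of the variables), hence in general neither free nor injective. The paper then applies this termwise to $F_\bullet$, unsplices the resulting exact sequence of complexes into short exact sequences with intermediate complexes $U^{(k)}_\bullet$, and shows each connecting homomorphism $\partial^{(k)}$ in the derived category is a quasi-isomorphism because $H_d(K_k F_\bullet) = 0$ for $k>0$; this last vanishing is exactly where the finite total dimension hypothesis enters, via exactness of $\Colim_S$ and $\Colim_S H_d(F_\bullet) = 0$. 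Your sketch never actually produces this zigzag---``combining per-summand Koszul resolutions with the differentials'' is the step that must be made precise, and it is nontrivial. Invoking Keller's lemma wholesale also requires identifying the relevant complexes as perfect objects, which, as noted above, does not follow from the per-degree finiteness hypothesis. The paper's direct argument sidesteps all of this by verifying the quasi-isomorphism concretely rather than appealing to an abstract CY structure.
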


\begin{proof}
	\label{proof:calabi-yau}
	For $z \in \Z^n$, we write $z = (z_1, \dotsc, z_n)$.
	For $n \in \N$, let $[n] \coloneqq \{ 1, \dotsc, n \}$
	and let $\binom{[n]}{k} \coloneqq \Set{ S \subseteq [n] ; \abs{S} = k }$.
	For $S = \{s_1 < \dotsc < s_k\} \in \binom{[n]}{k}$ and $z \in \Z^n$, $w \in \Z^k$, we let
	\[
	z|^S_w \coloneqq (z_1, \dotsc, z_{s_1-1}, w_1, z_{s_1+1}, \dotsc, z_{s_k-1}, w_k, z_{s_k+1},\dotsc,z_n)
	\]
	be the $n$-tuple obtained from $z$ by replacing the components indexed by $S$ by the entries of~$w$.
	For any module $M \in \Pers{\Z^n}$ and $S \subseteq [n]$, we let $\Colim_S M$ be the module with
	\begin{align*}
		(\Colim_S M)_z &= \colim_{w \in \Z^k} M_{z|^S_w},&
		(\Colim_S M)_{z \leq z'} &= \colim_{w \in \Z^k} M_{z|^S_w \leq z'|^S_w}.
	\end{align*}
	For example, for $n = 3$ we get
	\(
	(\Colim_{\{1, 3\}} M)_{(z_1, z_2, z_3)} = \colim_{(w_1, w_2) \in \Z^2} M_{(w_1, z_2, w_2)}.
	\)
	The module $\Colim_S M$ is constant along the axes specified by $S$.
	In particular, $\Colim M = \Colim_{[n]} M$ is the module that is constantly $\colim M$.
	For a module $M$, we define the modules $K_k M = \bigoplus_{S \in \binom{[n]}{k}} \Colim_S M$ for each $k$.
	If $S \subseteq S'$, then there is a canonical morphism $\Colim_{S} M \to \Colim_{S'} M$.
	For a free module $F(z)$, these assemble to an exact sequence
	\begin{equation}
		\label{proof:calabi-yau:les-1}
		\quad 0 \to F(z) \to K_1 F(z) \to \dotsb \to K_n F(z) \to I(z)\Shift{\epsilon} \to 0,
	\end{equation}
	called the \emph{Koszul complex} of $F(z)$.
	The last morphism is the canonical morphism $K_n F(z) = \Colim F(z) = \Lim I(z)\Shift{\epsilon} \to I(z)\Shift{\epsilon}$.
	Let $F_\bullet$ be a bounded complex of free modules.
	Using $\nu F(z) = I(z)$, we get an exact sequence
	\begin{equation}
		\label{proof:calabi-yau:les-2}
		K_\bullet \colon \quad 0 \to F_\bullet \to K_1 F_\bullet \to \dotsb \to K_n F_\bullet \to \nu F_\bullet\Shift{\epsilon} \to 0
	\end{equation}
	of chain complexes, given by taking a shifted copy of \cref{proof:calabi-yau:les-1} for every $F(z)$ in $F_\bullet$.
	We unsplice~\eqref{proof:calabi-yau:les-2} into short exact sequences
	\begin{equation}
		\label{proof:calabi-yau:ses}
		\begin{tikzcd}[ampersand replacement=\&, row sep={0.8cm,between origins}, column sep={1cm,between origins}]
			\& \& \& 0 \ar[dr] \& \& 0 \& 0 \ar[dr] \& \& \& \&  \&[0.4cm] \\
			\& \& \& \& U^{(1)}_\bullet \ar[dr] \ar[ur] \& \& \& U^{(n-1)}_\bullet \ar[dr] \\
			0 \ar[r] \&
			F_\bullet \ar[rr] \ar[dr, equal] \& \&
			K_1 F_\bullet \ar[rr] \ar[ur] \&\&
			K_2 F_\bullet \ar[rrr, "\textstyle\dotsb" description] \ar[dr] \&\&\&
			K_n F_\bullet \ar[rr] \ar[dr] \& \&
			\nu F_\bullet\Shift{\epsilon} \ar[r] \&
			0
			\\
			\& \& F_\bullet \ar[ur] \& \& \& \&
			U^{(2)}_\bullet \ar[dr] \& \& \& \nu F_\bullet\Shift{\epsilon} \ar[ur, equal] \ar[dr]
			\\
			\& 0  \ar[ur]  \& \& \& \& \& \& 0 \& \& \& 0
		\end{tikzcd}
	\end{equation}
	with chain complexes $U^{(k)}_\bullet$ for each $k$.
	Each of these short exact sequences gives a triangle in the derived category $\Db(\Pers{\Z^n})$ \cite[\S 10.4.9]{Weibel:2003}.
	We obtain connecting homomorphisms
	\begin{align*}
		\partial^{(1)}\colon  U^{(1)}_\bullet[1] &\to F_\bullet,&
		\partial^{(2)}\colon  U^{(2)}_\bullet[1] &\to U^{(1)}_\bullet,&
		&\cdots &
		\partial^{(n-1)}\colon \nu F_\bullet \Shift{\epsilon}[1] &\to U^{(n-1)}_\bullet
	\end{align*}
	in $\Db(\Pers{\Z^n})$.
	These fit into the long exact sequences
	\begin{equation}
		\label{proof:calabi-yau:les}
		\begin{aligned}
			\dotsb \to
			H_{d+1}(K_1 F_\bullet) \to
			H_{d+1}(U^1_\bullet) &\xrightarrow{\mathmakebox[0.8cm][c]{\partial^{(1)}}}
			H_{d}(F_\bullet) \to
			H_{d}(K_1 F_\bullet) \to
			\dotsb,
			\\
			\dotsb \to
			H_{d+2}(K_2 F_\bullet) \to
			H_{d+2}(U^2_\bullet)
			&\xrightarrow{\mathmakebox[0.8cm][c]{\partial^{(2)}}}
			H_{d+1}(U^{(1)}_\bullet) \to
			H_{d+1}(K_2 F_\bullet) \to
			\dotsb,
			\\
			&\mathrel{\mathmakebox[\widthof{$\xrightarrow{\mathmakebox[0.8cm][c]{}}$}]{\vdots}} \\
			\dotsb \to
			H_{d+n}(K_n F_\bullet) \to
			H_{d+n}(\nu F_\bullet\Shift{\epsilon})
			&\xrightarrow{\mathmakebox[0.8cm][c]{\partial^{(n-1)}}}
			H_{d+n-1}(U^{(n-1)}_\bullet) \to
			H_{d+n-1}(K_n F_\bullet) \to \dotsb,
		\end{aligned}
	\end{equation}
	induced by the short exact sequences \cref{proof:calabi-yau:ses}.
	Since $H_d(F_\bullet)$ is of finite total dimension for all $d$, we have $\Colim_S H_d(F_\bullet) = 0$ if $\abs{S} > 0$.
	The functor $\Colim_S$ is exact for all $S$ because it is a directed colimit.
	In particular,
	\(
		\textstyle
		H_d(K_k F_\bullet)
		= H_d\bigl({\bigoplus_{\abs{S}=k}} \Colim_S F_\bullet\bigr)
		= 0
	\)
	for all $k > 0$.
	Therefore, the long exact sequences \cref{proof:calabi-yau:les}
	show that all connecting homomorphisms~$\partial^{(k)}$ are quasi-isomorphisms.
	Thus, $\partial^{(1)} \circ \dotsb \circ \partial^{(n-1)}\colon \nu F_\bullet [n] \Shift{\epsilon} \longrightarrow F_\bullet$ is a quasi-isomorphism.
\end{proof}

\begin{corollary}
	\label{thm:calabi-yau for modules}
	Let $M \in \Pers{\Z^n}$ be of finite total dimension.
	\begin{enumerate}
		\item If $F_\bullet$ is a free resolution of $M$, then $\nu F_\bullet[n]\Shift{\epsilon}$ is an injective resolution of $M$.
		\item If $I^\bullet$ is an injective resolution of $M$, then $\nu^{-1} I^\bullet[-n]\Shift{-\epsilon}$ is a free resolution of $M$.
	\end{enumerate}
\end{corollary}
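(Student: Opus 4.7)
The plan is to deduce both parts from \cref{thm:calabi-yau}, with part (1) as a direct application and part (2) reducing to part (1) via the duality $(-)^*$.

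For part (1), I would simply check that the hypothesis of \cref{thm:calabi-yau} holds for $F_\bullet$: since $F_\bullet$ is a free resolution of $M$, its homology is $M$ in degree $0$ and zero in all other degrees, so each $H_d(F_\bullet)$ has finite total dimension because $M$ does. By \cref{thm:nakayama functor}, each term of $\nu F_\bullet$ is injective, so $\nu F_\bullet[n]\Shift{\epsilon}$ is a complex of injective modules. The quasi-isomorphism $F_\bullet \simeq \nu F_\bullet[n]\Shift{\epsilon}$ provided by the theorem then exhibits the right-hand side as an injective resolution of $M$.

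For part (2), the strategy is to first pass to the dual, then apply part (1), then dualize back. Since $(-)^*$ is an exact contravariant equivalence that interchanges injective and free modules (via $I(z)^* = F(-z)$ and $F(z)^* = I(-z)$, noted in \cref{Sec:Pers_Mod}), dualizing the injective resolution $I^\bullet$ of $M$ gives a chain complex $(I^\bullet)^*$ of free modules quasi-isomorphic to $M^*$, i.e., a free resolution of $M^*$. Since $M^*$ has finite total dimension (as $M$ does), part (1) applies to yield $\nu (I^\bullet)^*[n]\Shift{\epsilon}$ as an injective resolution of $M^*$. Dualizing once more produces a complex of free modules quasi-isomorphic to $M^{**} = M$, which is the desired free resolution.

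It remains to identify this resolution with $\nu^{-1} I^\bullet[-n]\Shift{-\epsilon}$. The key object-level identity is $(\nu J^*)^* \cong \nu^{-1} J$ for any injective $J$: checking on a generator $J = I(z)$, both sides equal $F(z)$. Combining this with the standard shift identities $(C[i])^* = C^*[i]$ and $(C\Shift{z})^* = C^*\Shift{-z}$ (plus the chain/cochain reindexing implicit when dualizing a resolution), one reads off the desired equality. The main obstacle I expect is purely notational, namely the bookkeeping of the chain/cochain conventions and the sign of the shift $[\pm n]$ induced by dualization; the substantive homological content is wholly contained in \cref{thm:calabi-yau} and the Nakayama equivalence.
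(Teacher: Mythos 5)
Your proposal is correct. The paper offers no explicit proof of this corollary, treating it as an immediate consequence of \cref{thm:calabi-yau}, and your argument supplies exactly the missing details: part~(1) is the theorem applied to the complex $F_\bullet$ (whose homology $M$, concentrated in degree $0$, has finite total dimension by hypothesis), and part~(2) passes through duality. The duality route for part~(2) is clean and in fact avoids a subtlety: one cannot simply apply \cref{thm:calabi-yau} directly to $G_\bullet \coloneqq \nu^{-1}I^\bullet[-n]\Shift{-\epsilon}$, because the theorem's hypothesis (finite total dimension of the homology of $G_\bullet$) is not known a priori but is rather the conclusion one is after. Dualizing $I^\bullet$ to a free resolution of $M^*$ and invoking part~(1) for $M^*$ sidesteps this, and your termwise identity $(\nu J^*)^* \cong \nu^{-1}J$, checked on generators $I(z)$, together with the shift compatibilities $(C[i])^* = C^*[i]$ and $(C\Shift{z})^* = C^*\Shift{-z}$ stated in the paper, is precisely what is needed to identify the resulting complex with $\nu^{-1}I^\bullet[-n]\Shift{-\epsilon}$; the remaining bookkeeping is indeed only the implicit chain/cochain reindexing $C^q \leftrightarrow C_{-q}$ the paper uses without comment (as in the formula $I^q = \nu F_{n-q}\Shift{\epsilon}$ of \cref{thm:calabi-yau for matrices}). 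An alternative for part~(2) that some readers might prefer: pick any free resolution $F_\bullet$ of $M$, note by part~(1) that $\nu F_\bullet[n]\Shift{\epsilon}$ and $I^\bullet$ are both injective resolutions of $M$ and hence homotopy equivalent, then apply the invertible operation $\nu^{-1}[-n]\Shift{-\epsilon}$ to both to conclude $\nu^{-1}I^\bullet[-n]\Shift{-\epsilon}$ is homotopy equivalent to $F_\bullet$; this trades your Nakayama-compatibility-with-duality computation for the standard uniqueness of resolutions in the homotopy category.
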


\begin{proof}[Proof of \cref{thm:UQ-LQ-isomorphism}]
	\label{proof:UQ-LQ-isomorphism}
	With $N^\bullet(K_*) = (\nu C_\bullet(K_*) \Shift\epsilon)^*$, \cref{thm:calabi-yau} gives
	\begin{equation*}
		H^{d+n}(N^\bullet(K_*))^*
		\cong H_d(N^\bullet(K_*)^*[n]) 
		\cong H_d(\nu C_\bullet(K_*) \Shift{\epsilon} [n]) 
		\cong H_d(C_\bullet(K_*)) 
		= H_d(K_*).\qedhere
	\end{equation*}
\end{proof}

\begin{lemma}
	\label{thm:graded matrices injective modules}
	A graded matrix $M$ represents a morphism $\bigoplus_j I(\cg^M_j) \to \bigoplus_i I(\rg^M_i)$  \textiff\ $M_{ij} =0$ whenever $\rg^M_i \not \leq \cg^M_j$.
\end{lemma}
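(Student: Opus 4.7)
The statement is the injective analogue of \cref{thm:graded matrices free modules}, whose one-line proof reduced to knowing $\Hom(F(z), F(z'))$. Analogously, my plan is to first establish
\[
	\Hom(I(z), I(z')) = \begin{smallcases} k & \text{if $z' \leq z$,} \\ 0 & \text{otherwise,} \end{smallcases}
\]
and then conclude in the same way: a morphism $\bigoplus_j I(\cg^M_j) \to \bigoplus_i I(\rg^M_i)$ decomposes as a matrix of component morphisms $I(\cg^M_j) \to I(\rg^M_i)$, whose Hom space is $k$ when $\rg^M_i \leq \cg^M_j$ and $0$ otherwise, giving exactly the stated condition.

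To derive the Hom formula, I would invoke the identity $I(z) = F(-z)^*$ stated just before this lemma, together with the contravariant equivalence $(-)^*$ on $\Pers{\Z^n}$ recalled in \cref{Sec:Pers_Mod}. Since pointwise finite-dimensional persistence modules are reflexive, this equivalence yields a natural isomorphism
\[
	\Hom(I(z), I(z')) = \Hom(F(-z)^*, F(-z')^*) \cong \Hom(F(-z'), F(-z)),
\]
which by the formula already used for free modules in the proof of \cref{thm:graded matrices free modules} is $k$ when $-z \leq -z'$, i.e., when $z' \leq z$, and vanishes otherwise.

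Should a more self-contained route be preferred, the same formula can be read off directly from the definition of $I(z)$: a natural transformation $\phi \colon I(z) \to I(z')$ is pointwise determined by a single scalar $c$ at grades $w$ with $w \leq z$ and $w \leq z'$, and the naturality square for any comparison $w \leq w'$ with $w' \leq z'$ but $w' \not\leq z$ forces $c = 0$, which happens exactly when $z' \not\leq z$. I do not expect any substantive obstacle; the only delicate point is keeping the inequalities straight when passing through $(-)^*$ and the sign change in $I(z) = F(-z)^*$, but this mirrors bookkeeping already carried out in the paper (for instance, in the definition of the graded transpose), so I anticipate no surprises.
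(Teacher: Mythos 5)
Your proposal is correct and takes essentially the same approach as the paper: the paper's proof is a one-liner stating $\Hom(I(z), I(z')) = k$ if $z \geq z'$ and $0$ otherwise, exactly the formula you establish, and you then conclude in the same component-wise way as the free case. Your derivation of the Hom formula via $I(z) = F(-z)^*$ and the contravariant duality is a correct elaboration of what the paper leaves implicit.
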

\begin{proof}
	This follows from $\Hom(I(z), I(z')) = \begin{smallcases} k & \text{if $z \geq z'$} \\ 0 & \text{otherwise}\end{smallcases}$
\end{proof}
In particular, a graded matrix represents a morphism of free modules \textiff\ it represents a morphism of injective modules (cf.~\cref{thm:graded matrices free modules}).

\begin{lemma}
	\label{thm:nakayama on matrices}
	Let $f\colon \bigoplus_{j =1}^n F(z_j) \to \bigoplus_{i =1}^m F(z'_i)$ be a morphism of free modules represented by the graded matrix $[f]$.
	Then the morphism $\nu f\colon \bigoplus_{j =1}^n I(z_j) \to \bigoplus_{i =1}^m I(z'_i)$ is represented by the same graded matrix $[\nu f] = [f]$.
\end{lemma}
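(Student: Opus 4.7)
The plan is to exploit additivity of the Nakayama functor $\nu$ to reduce the verification to the case of a single ``canonical'' morphism between indecomposable free modules, and then to unwind the definition $\nu = \IHom(-, F(0))^*$ on such a morphism.

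First I would fix normalized generators. For $z' \leq z$, \cref{thm:graded matrices free modules} and \cref{thm:graded matrices injective modules} imply that $\Hom(F(z), F(z'))$ and $\Hom(I(z), I(z'))$ are one-dimensional over $k$. Let $\phi_{z,z'} \in \Hom(F(z), F(z'))$ be the canonical generator determined by sending $1 \in F(z)_z$ to the image of $1 \in F(z')_{z'}$ under the structure map $F(z')_{z' \leq z}$, and let $\psi_{z,z'} \in \Hom(I(z), I(z'))$ be the canonical generator determined by $1 \in I(z)_{z'} = k \mapsto 1 \in I(z')_{z'} = k$. By the conventions for representing morphisms of free and injective modules by graded matrices, the $(i,j)$-component of $f$ equals $[f]_{ij} \cdot \phi_{z_j, z'_i}$ and the $(i,j)$-component of $\nu f$ equals $[\nu f]_{ij} \cdot \psi_{z_j, z'_i}$. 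Since $\nu$ is additive, the lemma reduces to the single identity $\nu \phi_{z,z'} = \psi_{z,z'}$ for all $z' \leq z$.

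Second, I would compute $\nu \phi_{z,z'}$ by unpacking $\nu = \IHom(-, F(0))^*$. By definition, $\IHom(F(z), F(0))_w = \Hom(F(z), F(0)\Shift{w}) = \Hom(F(z), F(-w))$, which is $k$ precisely when $-w \leq z$, with canonical generator $\phi_{z, -w}$; assembling over all $w$ produces a natural isomorphism $\IHom(F(z), F(0)) \cong F(-z)$ under which the canonical generator $\phi_{z,-w}$ corresponds to $1 \in F(-z)_w$. Pre-composition with $\phi_{z,z'}$ sends $\phi_{z', -w}$ to $\phi_{z', -w} \circ \phi_{z, z'} = \phi_{z, -w}$, so $\IHom(\phi_{z,z'}, F(0))$ is the canonical morphism $F(-z') \to F(-z)$. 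Applying $(-)^*$ reverses direction and carries the canonical morphism of free modules $F(-z') \to F(-z)$ to the canonical morphism of injective modules $F(-z)^* \to F(-z')^*$, i.e.\ $I(z) \to I(z')$, which is $\psi_{z,z'}$.

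Third, combining the previous two steps with additivity of $\nu$ yields $[\nu f]_{ij} = [f]_{ij}$ for every $i,j$, proving the lemma. The only slightly delicate point is bookkeeping the chain of identifications $\nu F(z) = \IHom(F(z), F(0))^* \cong F(-z)^* = I(z)$ so that the canonical generators correspond to canonical generators without a hidden scalar or sign; this is a routine check once $\phi_{z,z'}$ and $\psi_{z,z'}$ have been normalized as above, and no further ideas beyond naturality of duality are required.
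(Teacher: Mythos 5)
The paper states this lemma without proof, leaving the verification implicit (together with the preceding remark ``one checks that $\nu F(z) = I(z)$''). Your argument is the direct verification the authors evidently have in mind: reduce by additivity of $\nu$ to the rank-one case, unwind $\nu = \IHom(-,F(0))^*$ to see that it carries the canonical generator $\phi_{z,z'}\in\Hom(F(z),F(z'))$ to the canonical generator $\psi_{z,z'}\in\Hom(I(z),I(z'))$ under the identification $\nu F(z)\cong I(z)$, and conclude entrywise. Your bookkeeping is correct: $\IHom(F(z),F(0))\cong F(-z)$ with $\phi_{z,-w}\leftrightarrow 1\in F(-z)_w$, precomposition by $\phi_{z,z'}$ becomes the canonical map $F(-z')\to F(-z)$, and dualizing lands on $\psi_{z,z'}$ because both agree at grade $z'$, where the structure maps make a morphism $I(z)\to I(z')$ determined. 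This is correct and complete; the only point worth emphasizing (which you do note) is that the equality $[\nu f]=[f]$ is relative to the specific natural identification $\nu F(z)\cong I(z)$ you exhibit — an arbitrary isomorphism would only give equality up to a diagonal rescaling.
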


\begin{corollary}
	\label{thm:calabi-yau for matrices}
	For $M \in \Pers{\Z^n}$ of finite total dimension and graded matrices $U_1,\dotsc,U_n$, the following are equivalent:
	\begin{enumerate}
		\item $U_1,\dotsc,U_n$ represent a free resolution $\dotsb 0 \to F_n \xto{U_n} \dotsb \xto{U_1} F_0$ of $M$,
		\item $U_1\Shift{\epsilon},\dotsc,U_n\Shift{\epsilon}$ represent an injective resolution $I^0 \xto{U_n\Shift{\epsilon}} \dotsb \xto{U_1\Shift{\epsilon}} I^n \to 0 \dotsb$ of $M$,
		\item $U_1\Shift{\epsilon}^T,\dotsc,U_n\Shift{\epsilon}^T$ represent a free resolution $\dotsb 0 \to G_n \xto{U_n\Shift{\epsilon}^T} \dotsb \xto{U_1\Shift{\epsilon}^T} G_0$ of $M^*$.
	\end{enumerate}
	In this case $I^q = \nu F_{n-q}\Shift{\epsilon} = G_q^*$ for all $q$.
\end{corollary}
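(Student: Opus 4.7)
The plan is to reduce the corollary to three ingredients already available: the module-level Calabi--Yau duality of \cref{thm:calabi-yau for modules}, the matrix-level identity $[\nu f] = [f]$ from \cref{thm:nakayama on matrices}, and the fact that the graded transpose represents the $k$-linear dual of a morphism, which follows directly from the definition (the transpose negates and reverses the row and column grades, exactly mirroring the effect of $(-)^*$ on generators of free and injective modules via $I(z)^* = F(-z)$).

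For (1) $\Leftrightarrow$ (2): Given a free resolution $F_\bullet$ of $M$ with differentials $U_1,\dotsc,U_n$, I first apply $\nu$ to obtain a chain complex of injective modules whose differentials are represented, by \cref{thm:nakayama on matrices}, by the same matrices $U_k$. The degree shift $[n]$ reinterprets this chain complex as a cochain complex concentrated in degrees $0,\dotsc,n$ with $I^q = \nu F_{n-q}$, which also reverses the indexing of the $U_k$ appearing as differentials, matching the order stated in (2). Finally, the grade shift $\Shift{\epsilon}$ yields, by \cref{thm:calabi-yau for modules}(1), an injective resolution of $M$, and at the matrix level produces exactly $U_k\Shift{\epsilon}$ by the definition of the matrix grade shift. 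The converse direction is the same argument applied to $\nu^{-1}[-n]\Shift{-\epsilon}$, using \cref{thm:calabi-yau for modules}(2).

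For (2) $\Leftrightarrow$ (3): I apply $(-)^*$ to the injective resolution in (2). Since $(-)^*$ is an exact contravariant equivalence sending injective modules to free modules, the result is a free resolution of $M^*$. The dual of a morphism represented by a graded matrix is represented by the graded transpose, so the differentials become $U_k\Shift{\epsilon}^T$. After the cochain-to-chain reindexing, this gives the free resolution in (3) with $G_q = (I^q)^*$. The converse applies $(-)^*$ again, using that it is involutive on modules of finite total dimension. The final identifications $I^q = \nu F_{n-q}\Shift{\epsilon} = G_q^*$ then read off directly from these constructions.

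The main obstacle is purely bookkeeping: keeping track of the interaction of the homological shift $[n]$ with the internal grade shift $\Shift{\epsilon}$, the reversal of differential order when converting between chain and cochain complexes, and ensuring that the matrix-shift convention is correctly paired with its module-level counterpart. No genuinely new computation is needed: \cref{thm:nakayama on matrices} and the definition of the graded transpose reduce all manipulations to relabeling of grades, so the matrix entries themselves propagate through the argument unchanged.
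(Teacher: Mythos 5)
Your approach — combining \cref{thm:calabi-yau for modules}, \cref{thm:nakayama on matrices}, and the fact that the graded transpose represents the $k$-linear dual (so that $\nu$, $[n]$, $\Shift{\epsilon}$, and $(-)^*$ all act on matrices by pure relabeling) — is exactly how the corollary is meant to follow from the preceding results, and the paper gives no separate proof. One bookkeeping point worth pinning down: dualizing the injective resolution reverses all arrows, so the differential $G_{q+1}\to G_q$ should be the graded transpose of $I^{q}\to I^{q+1}$, which in the notation of~(2) is $U_{n-q}\Shift{\epsilon}$; you should verify that this is consistent simultaneously with your identification $G_q=(I^q)^*$ and with the labeling $G_{q+1}\xto{U_{q+1}\Shift{\epsilon}^T}G_q$ appearing in the statement, since at first glance these put the $U_k$ in opposite orders.
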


\begin{example}
	\tikzset{module diagram/.append style={x=3mm, y=3mm, baseline=4mm}}
	\newcommand{\coordinatesA}{
		\coordinate (G1)  at=(0.5,2);
		\coordinate (G2)  at=(2,0.5);
		\coordinate (R1)  at=(0.5,3.5);
		\coordinate (R2)  at=(3.5,0.5);
		\coordinate (R3)  at=(2,2);
		\coordinate (S1)  at=(3.5,3.5);
		\DrawGrid[-0.5](0){4}[-0.5](0){4}
		\AxesHomology
	}
	Consider the module
	\[
		M = \begin{tikzpicture}[module diagram]
			\coordinatesA
			\node[generator, at=(G1)] {};
			\node[generator, at=(G2)] {};
			\node[relation, at=(R1)] {};
			\node[relation, at=(R2)] {};
			\node[relation, at=(R3)] {};
			\node[syzygy, at=(S1)] {};
			\filldraw[fill=gray, fill opacity=0.5] ($(R1)-(0,.5)$) -- (G1) -- (R3) -- (G2) -- ($(R2)-(0.5,0)$) -- ($(S1)-(0.5,0.5)$) -- ($(R1)-(0,0.5)$);
		\end{tikzpicture},
	\]
	where $M_z = k$ if $z$ lies in the shaded region, $M_z = 0$ otherwise, and all structure morphisms between non-zero vector spaces of $M$ being identities.
	The first line of the following diagram exhibits $F_\bullet$ as a free resolution of $M$, and the second line exhibits $\nu F_\bullet[2]\Shift{\epsilon}$ as an injective resolution of $M$:
	\begin{gather*}
		\begin{tikzcd}[row sep=0pt, column sep=small, ampersand replacement=\&]
			\&
			0
			\rar \&
			\begin{tikzpicture}[module diagram]
				\coordinatesA
				\FreeModule[violet]{S1}{syzygy}
			\end{tikzpicture}
			\rar["{\Mtx*[r]{1 \\ -1 \\ 1}}"] \&[1em]
			\begin{tikzpicture}[module diagram]
				\coordinatesA
				\FreeModule[red]{R1}{relation}
				\FreeModule[blue]{R2}{relation}
				\FreeModule[violet]{R3}{relation}
			\end{tikzpicture}
			\rar["{\Mtx*[r]{1 & 1 &0 \\ 0 & -1 & 1}}"] \&[2em]
			\begin{tikzpicture}[module diagram]
				\coordinatesA
				\FreeModule[orange]{G1}{generator}
				\FreeModule[magenta]{G2}{generator}
			\end{tikzpicture}
			\rar \&
			\begin{tikzpicture}[module diagram]
				\coordinatesA
				\node[generator, at=(G1)] {};
				\node[generator, at=(G2)] {};
				\node[relation, at=(R1)] {};
				\node[relation, at=(R2)] {};
				\node[relation, at=(R3)] {};
				\node[syzygy, at=(S1)] {};
				\filldraw[fill=gray, fill opacity=0.5] ($(R1)-(0,.5)$) -- (G1) -- (R3) -- (G2) -- ($(R2)-(0.5,0)$) -- ($(S1)-(0.5,0.5)$) -- ($(R1)-(0,0.5)$);
			\end{tikzpicture}
			\rar \&
			0 \\[-2mm]
			\& \& F_2 \& F_1 \& F_0 \& M
			\\[2mm]
			0
			\rar \&
			\begin{tikzpicture}[module diagram]
				\coordinatesA
				\node[generator, at=(G1)] {};
				\node[generator, at=(G2)] {};
				\node[relation, at=(R1)] {};
				\node[relation, at=(R2)] {};
				\node[relation, at=(R3)] {};
				\node[syzygy, at=(S1)] {};
				\filldraw[fill=gray, fill opacity=0.5] ($(R1)-(0,.5)$) -- (G1) -- (R3) -- (G2) -- ($(R2)-(0.5,0)$) -- ($(S1)-(0.5,0.5)$) -- ($(R1)-(0,0.5)$);
			\end{tikzpicture}
			\rar \&
			\begin{tikzpicture}[module diagram]
				\coordinatesA
				\node[syzygy, at=(S1), violet] {};
				\InjectiveModule[violet]{$(S1) - (0.5,0.5)$}
			\end{tikzpicture}
			\rar["{\Mtx*[r]{1 \\ -1 \\ 1}}"] \&
			\begin{tikzpicture}[module diagram]
				\coordinatesA
				\node[red, relation] at (R1) {};
				\node[blue, relation] at (R2) {};
				\node[violet, relation] at (R3) {};
				\InjectiveModule[red]{$(R1)-(0.5,0.5)$}
				\InjectiveModule[blue]{$(R2)-(0.5,0.5)$}
				\InjectiveModule[violet]{$(R3)-(0.5,0.5)$}
			\end{tikzpicture}
			\rar["{\Mtx*[r]{1 & 1 & 0  \\ 0 & -1 & 1}}"] \&
			\begin{tikzpicture}[module diagram]
				\coordinatesA
				\node[orange, generator] at (G1) {};
				\node[magenta, generator] at (G2) {};
				\InjectiveModule[orange]{$(G1)-(0.5,0.5)$}
				\InjectiveModule[magenta]{$(G2)-(0.5,0.5)$}
			\end{tikzpicture}
			\rar \&
			0. \\[-2mm]
			\& M \& \nu F_2\Shift{\epsilon} \& \nu F_1\Shift{\epsilon} \& \nu F_0\Shift{\epsilon}
		\end{tikzcd}
	\end{gather*}
\end{example}

\subsection{Pulling back modules from the colimit}
\label{sec:computing lq-cohomology}
From now on, we consider $\Z^2$-persistence modules only.
It remains to explain how we compute $H^\bullet(N^\bullet(K_*))$.
In principle, this could be done by a procedure analogous to the LW-Algorithm described in \cref{sec:lw-algorithm}:
the horizontal sequence in the commutative diagram%
\begin{equation}
	\label{eq:free resolution cohomology}
	\begin{tikzcd}[ampersand replacement=\&]
		\& N^d(K_*) \ar[dr, "\d^{d+1}", ""' name=x] \dar \\
		0 \rar \& Z^{d+1}(N^\bullet(K_*)) \rar[ "i^{d+1}"', "" name=y]
		\& N^{d+1}(K_*) \rar[ "p^{d+1}"] \& Z^{d+2}(N^\bullet(K_*)) \dar[ "i^{d+2}"] \\
		\& \& \& N^{d+2}(K_*) \ar[dr, "\d^{d+3}"] \ar[from=ul, "\d^{d+2}"']\\
		\& \& \& \& N^{d+3}(K_*)
	\end{tikzcd}
\end{equation}
is a free resolution of $H^{d+2}(N^\bullet(K_*))$,
and we can obtain matrices representing this resolution as described in \cref{sec:lw-algorithm}.
This would, however, involve the coboundary maps $\d^{d+2}$ and $\d^{d+3}$, leading to a very expensive computation, especially for function-Rips bifiltrations.
Instead, we propose a method that computes a free resolution of $H^{d+2}(N^\bullet(K_*))$ from $\d^{d+1}$ only.

For a vector space $V$, denote by $\Delta V$ the persistence modules with components $(\Delta V)_z = V$, such that all structure morphisms of $\Delta V$ are the identity.
Let $\Colim M = \Delta \colim M$.
\begin{definition}
	For a module $M \in \Pers{\ZZ}$ and a vector space $V \subseteq \colim M$,
	we let~$[V]_M \in \Pers{\ZZ}$ be the preimage of $\Delta V$ under the canonical map $\eta_M\colon M \to \Colim M$.
\end{definition}

\begin{lemma}
	\label{thm:pullback of kernels}
	If $f\colon M \to N$ is a morphism and $N$ is free, then $\ker f = [\colim \ker f]_M$.
\end{lemma}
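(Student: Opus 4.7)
The plan is to prove both inclusions $\ker f \subseteq [\colim \ker f]_M$ and $\ker f \supseteq [\colim \ker f]_M$ separately, working from the defining pullback square
\[
\begin{tikzcd}
[\colim \ker f]_M \ar[r] \ar[d] & \Delta(\colim \ker f) \ar[d] \\
M \ar[r, "\eta_M"'] & \Colim M
\end{tikzcd}
\]
which identifies $[\colim \ker f]_M$ pointwise with $\Set{ m \in M_z ; \eta_M(m) \in \colim \ker f }$ for each $z \in \ZZ$.

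The first inclusion will be essentially formal. For $m \in (\ker f)_z$, naturality of $\eta$ applied to the inclusion $\ker f \hookrightarrow M$ shows that $\eta_M(m)$ is in the image of the natural map $\colim \ker f \to \colim M$, so $m$ lies in $[\colim \ker f]_M$ by the pullback description.

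For the reverse inclusion I would argue as follows. Since $\ZZ$ is a directed poset, filtered colimits in $\vect$ are exact, so $\colim \ker f = \ker(\colim f)$. Given $m \in [\colim \ker f]_M$, naturality of $\eta$ yields
\[
\eta_N(f(m)) \;=\; (\colim f)(\eta_M(m)) \;=\; 0,
\]
the last equality because $\eta_M(m) \in \ker(\colim f)$ by hypothesis. It remains to conclude $f(m) = 0$, which amounts to showing that $\eta_N$ is injective on every graded component when $N$ is free.

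The only step that requires genuine content is this injectivity of $\eta_N$ for free $N$, and I expect this to be the main (mild) obstacle. It reduces, via the compatibility of $\colim$ with direct sums, to the case $N = F(z')$. There $F(z')_z$ is either $0$ or $k$, the colimit is $k$, and $\eta_{F(z')}$ is the identity on every non-zero component, hence injective. This completes the argument.
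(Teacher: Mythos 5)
Your proof is correct and follows essentially the same route as the paper: both inclusions are established via a diagram chase using exactness of the directed colimit functor (so $\colim \ker f = \ker(\colim f)$) and injectivity of $\eta_N$ for free $N$. The only difference is that you also spell out why $\eta_N$ is injective for free $N$ by reducing to $N = F(z')$, whereas the paper states this fact without proof.
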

\begin{proof}
	\label{proof:pullback of kernels}
	If $N$ is free, then $\eta_N\colon N \to \Colim N$ is injective.
	For every submodule $L \subseteq M$, we have $L \subseteq \eta_M^{-1}(\Colim L) = [\colim L]_M$, so $\ker f \subseteq [\colim \ker f]_M$.
	It remains to show the other inclusion $[\colim \ker f]_M \subseteq \ker f$.
	Consider the commutative diagram
	\[
	\begin{tikzcd}
		{} [\colim\ker f]_M \ar[drr, hook, "j", bend left=0.4cm, shorten >=5pt] \ar[ddr, "\eta_{[\colim \ker f]_M}"', bend right=0.4cm] &[-15mm]\\[-2mm]
		& \ker f \rar["i"', hook] \dar["\eta_{\ker f}"] \ar[hook, ul]& M \rar["f"] \dar["\eta_M"] & N \dar[hook, "\eta_N"] \\
		& \Colim \ker f \rar["\Colim i"', hook] & \Colim M \rar["\Colim f"'] & \Colim N.
	\end{tikzcd}
	\]
	The functor $\Colim$ is a directed colimit and thus exact.
	Therefore, $\Colim \ker f = \ker \Colim f$.
	This implies $\eta_N \circ f \circ j = \Colim f \circ \Colim i \circ \eta_{[\colim \ker f]_M} = 0$.
	Since $\eta_N$ is injective, we obtain $f \circ j = 0$.
	Therefore, $j$ factors uniquely through $\ker f$.
	This proves the claim.
\end{proof}

The \emph{lexicographic order} $\lex{\preceq}$ and the \emph{colexicographic order} $\colex{\preceq}$ are the total orders on~$\ZZ$
defined as \begin{alignat*}{2}
	(x, y) &\lex\preceq (x', y') &&\ \text{iff either $x < x'$ or $x = x'$ and $y \leq y'$},\\
	(x, y) &\colex\preceq (x', y') &&\ \text{iff either $y < y'$ or $y = y'$ and $x \leq x'$}.
\end{alignat*}
Two grades $z_1, z_2 \in \ZZ$ satisfy $z_1 \leq z_2$ \textiff\ $z_1 \colex{\preceq} z_2$ and $z_1 \lex{\preceq} z_2$.

\begin{definition}
	For $b \in k^m$ and $r \in (\ZZ)^m$, the \emph{lex pivot} of $b$ with respect to $r$, $\lpiv(b)$, is the smallest index $i$  such that $b_i\ne 0$ and $r_i$ takes its maximum value \wrt\ $\lex\preceq$.  The \emph{colex piot}, $\cpiv(b)$, is defined analogously.  For $0\in k^m$, we let $\lpiv(0)=\cpiv(0)=0$.
	\end{definition}

\begin{algorithm}[tbp]
	\KwData{An $m \times n$-matrix $B$ representing a generating set of $V$, $r=(r_1,\ldots,r_m) \in (\ZZ)^m$.}
	\KwResult{A graded $m \times n$-matrix whose nonzero columns represent a basis of $[V]_M$.}
	\caption{%
		Computes a basis of $[V]_M$, where $M = \bigoplus_{i = 1}^m F(r_i)$ and $V \subseteq \colim M$.
	}
	\label{alg:bigraded reduction}
	\Fn{\Bireduce{B}}{
	$p \gets 0 \in [m]^n$\;
	\For{$j = 1,\dotsc,n$}{
		\label{alg:bigraded reduction:column loop 1}
		\While{$i \gets \cpiv(B_j) \neq 0$}{
			\lIf{$p_i = 0$}{$p_i \gets j$; \Break}
			$B_j \gets B_j + B_{p_i}$
		}
	}
	$p \gets 0 \in [m]^n$\;
	\For{$j' = 1,\dotsc,n$}{
		$j \gets j'$\;
		\While{$i \gets \lpiv(B_j) \neq 0$}{
			\lIf{$p_i = 0$}{$p_i \gets j$; \Break}
			\nlset{$(*)$}\label{alg:bigraded reduction:secondary pivot}%
			\lIf{$\cpiv(B_j) < \cpiv(B_{p_i})$}{swap $p_i$ and $j$}
			$B_j \gets B_j + B_{p_i}$
		}
	}
	\Return{$[B]_r$}
	}
\end{algorithm}

\begin{theorem}
	\label{thm:correctness of bireduction algorithm}
	Let $M$ be a free $\ZZ$-persistence module of finite rank with a fixed basis,
	let $V \subseteq \colim M$ be a subspace, and let $B$ be a matrix representing a generating set of $V$.
	Then $[V]_M$ is free, and \cref{alg:bigraded reduction} calculates a graded matrix representing a basis of $[V]_M$.
\end{theorem}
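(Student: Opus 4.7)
The plan is to exhibit the columns of the output matrix as a basis of a free module that maps isomorphically onto $[V]_M$ via the morphism they represent. I proceed in three steps: (1) identify the invariants maintained by the two phases of the algorithm; (2) use these invariants to relate the grade of any $v\in V$ to the grades of the columns $b_j$ in its expansion; (3) conclude that the induced morphism $\phi\colon \bigoplus_j F(g(b_j))\to M$ is injective with image $[V]_M$.

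For step (1), every column operation is either a column addition or a relabeling, so the $k$-span of the nonzero columns in $\colim M\cong k^m$ is invariant throughout and always equals $V$. Phase~1 is a standard left-to-right reduction that produces nonzero columns with pairwise distinct colex pivots. In phase~2, the swap at line~$(*)$ guarantees that every addition $B_j\gets B_j + B_{p_i}$ is performed with $\cpiv(B_{p_i})$ strictly smaller than $\cpiv(B_j)$ in colex order. Since $\cpiv(B_{p_i})$ is, by definition, the colex-largest row of $B_{p_i}$ with a nonzero entry, the entry of $B_{p_i}$ at row $\cpiv(B_j)$ vanishes, so the addition preserves $\cpiv(B_j)$. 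Therefore the distinct colex pivots established in phase~1 persist throughout phase~2, while phase~2 additionally achieves distinct lex pivots of the nonzero columns.

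For step (2), by \cref{thm:graded matrices free modules} the column grade assigned to a column $b$ by $[B]_r$ is the componentwise maximum of the $r_i$ over rows $i$ with $b_i\neq 0$; its $x$-coordinate equals $r_{\lpiv(b),x}$ and its $y$-coordinate equals $r_{\cpiv(b),y}$. Let $v\in V$ with unique expansion $v=\sum_j c_j b_j$, using that the $b_j$ are linearly independent in $k^m$ by distinctness of (either set of) pivots. Since the pivots $\cpiv(b_j)$ for $c_j\neq 0$ are distinct, no cancellation occurs at the colex-largest among them; hence $\cpiv(v)$ equals that pivot, so $g(v)_y=\max_{c_j\neq 0} g(b_j)_y$. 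The analogous argument using distinct lex pivots gives $g(v)_x=\max_{c_j\neq 0} g(b_j)_x$. Thus the minimum grade at which $v$ lifts to $M$ equals $\max_{c_j\neq 0} g(b_j)$ componentwise.

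For step (3), let $\phi\colon\bigoplus_j F(g(b_j))\to M$ be the morphism represented by $[B]_r$. Since $M$ is free, $\eta_M\colon M\to\Colim M$ is componentwise injective; the composition $\eta_M\circ\phi$ sends $(c_j)\mapsto\sum_j c_j b_j\in k^m$, which is injective by linear independence of the $b_j$, so $\phi$ is injective. The inclusion $\im\phi\subseteq [V]_M$ is immediate, since each $\phi(e_j)=b_j$ lies over $V$. Conversely, for $m\in([V]_M)_z$, write $\eta_M(m)=v=\sum_j c_j b_j$; step (2) forces $g(b_j)\le g(v)\le z$ for each $j$ in the support, so $\sum_j c_j e_j$ is a valid element of $\bigl(\bigoplus_j F(g(b_j))\bigr)_z$, and its image under $\phi$ has the same image under $\eta_M$ as $m$, hence equals $m$. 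Therefore $\phi$ is an isomorphism onto $[V]_M$, witnessing that $[V]_M$ is free with basis represented by the nonzero columns of $[B]_r$. The main obstacle is step~(1): the algorithm's correctness hinges on the fact that the swap at line~$(*)$ preserves the colex-distinctness established in phase~1 while phase~2 simultaneously reduces lex pivots, which must be verified by the pivot-preservation argument sketched above.
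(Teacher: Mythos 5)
Your proof is correct and takes essentially the same approach as the paper's: establish that after both phases the nonzero columns have pairwise distinct lex- and colex-pivots (with the swap at line~$(*)$ preserving the colex-pivots from phase~1), then use the distinctness of pivots to show that no cancellation can occur at the extremal pivot rows, so that the minimal grade at which an element of $V$ lifts into $M$ equals the join of the corresponding column grades. Your write-up spells out a couple of steps — why the swap preserves colex-pivots, and the coordinate-wise identification of the column grade with the lex- and colex-pivot rows — more explicitly than the paper's terse "since all columns of $A$ have distinct lex- and colex-pivots, $v$ cannot have smaller grade than $\bigvee_{\xi_j\neq 0}\cg^A_j$," but the underlying argument is the same.
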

For a tuple $r \in (\ZZ)^m$ and a matrix $M$ with $m$ rows, let $[M]_r$ be the graded matrix with $\rg^{[M]_r}_i = r_i$ and $\cg^{[M]_r}_j = \bigvee_{M_{ij} \neq 0} r_i$.
Then $[M]_r$ has the least possible column grades for which $[M]_r$ represents a map of free modules.
\begin{proof}[Proof of \cref{thm:correctness of bireduction algorithm}]
	Let $m_1, \dotsc, m_s$ be a basis of $M$  and $r = (g(m_1), \dotsc, g(m_s))$.
	Without loss of generality, we assume that $B$ represents a basis of $V$.
	The first for-loop in \cref{alg:bigraded reduction} is a standard reduction scheme.
	In each iteration, the pivot index of one column decreases, so the loop terminates.
	When it does, all columns have distinct colex-pivots.
	During each iteration of the second for-loop, the lex-pivot of a column decreases.
	When it terminates, all columns have distinct lex-pivots.
	During the second loop, line \ref{alg:bigraded reduction:secondary pivot} ensures that no column is added to another column with a smaller colex-pivot.
	Since all columns have distinct colex-pivots after the first for-loop, the colex-pivots of the columns thus do not change during the second for-loop.
	Therefore, when the algorithm terminates, all columns of $B$ have pairwise distinct lex- and colex-pivots.

	Let $A = [B]_r$ for the state of $B$ when the algorithm terminates.
	Then $A$ represents a basis $\alpha_1, \dotsc, \alpha_t$ of a free submodule $N$ of $M$, with $g(\alpha_j) = \cg^A_j$ .
	It remains to show that $N = [V]_M$.
	Since all column operations performed by \cref{alg:bigraded reduction} are invertible, $A$ represents a basis of $V$.
	Therefore, $\colim N = V$, which implies $N \subseteq [V]_M$.
	Let $v \in [V]_M$.
	Then there are unique coefficients $\xi_j$ such that $\eta_M(v) = \sum_{j=1}^{t} \xi_j \eta_M(\alpha_j)$.
	Since $\eta_M$ is injective, $M_{g(v) \leq z}(v) = \sum_{j=1}^{t} \xi_j M_{g(\alpha_j) \leq z} (\alpha_j)$ for all $z \geq g(v) \vee \bigvee_{\xi_j \neq 0} \cg^{A}_j$.
	Since all columns of $A$ have distinct lex- and colex-pivots, $v$ cannot have smaller grade than $\bigvee_{\xi_j \neq 0} \cg^{A}_j$,
	so $v \in N$.
	This proves the claim.
\end{proof}

\subsection{The free resolution of cohomology}
\label{sec:computing cohomology}
Assume $C^\bullet$ is a cochain complex of free modules such that $\colim H^\bullet(C^\bullet) = 0$,
and recall the commutative diagram \eqref{eq:free resolution cohomology}.
A matrix $[\d^{d+1}]$ representing $\d^{d+1}$ is a generating system for $\colim Z^{d+1}(C^\bullet)$,
and \cref{thm:pullback of kernels} states that $Z^{d+1}(C^\bullet) = [\colim Z^{d+1}(C^\bullet)]_{C^{d+1}}$.
Applying \cref{alg:bigraded reduction} to $[\d^{d+1}]$ thus yields a graded matrix $[i^{d+1}]$ representing a basis of $Z^{d+1}(C^\bullet)$.
\begin{lemma}
	\label{thm:extend resolution to right}
	If $0 \to F_2 \xto{f_2} F_1 \xto{f_1} F_0$ is a free resolution of a module of finite total dimension, then $(\nu F_0)^* = \ker (\nu f_2)^*$.
\end{lemma}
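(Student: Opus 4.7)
The plan is to invoke the Calabi--Yau property (\cref{thm:calabi-yau}, or equivalently \cref{thm:calabi-yau for modules}) to turn the assumed free resolution into an injective resolution, and then dualize. Let $M = \mathrm{coker}\, f_1$ be the module being resolved; by hypothesis it has finite total dimension, so $H_d(F_\bullet) = 0$ for $d \neq 0$ and $H_0(F_\bullet) = M$ has finite total dimension. Thus \cref{thm:calabi-yau} (with $n = 2$) supplies a quasi-isomorphism $\nu F_\bullet[2]\Shift{\epsilon} \simeq F_\bullet$, which unpacked means that the sequence
\[
	0 \to M\Shift{-\epsilon} \to \nu F_2 \xto{\nu f_2} \nu F_1 \xto{\nu f_1} \nu F_0 \to 0
\]
is exact (after stripping the global shift $\Shift{\epsilon}$, which is harmless since it preserves kernels and images). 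In particular, $\nu f_1$ is surjective and $\im \nu f_2 = \ker \nu f_1$.

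Next, I would apply the duality $(-)^*$. Since $(-)^*$ is an exact contravariant equivalence on $\Pers{\Z^n}$ (recalled in \cref{Sec:Pers_Mod}), the above exact sequence dualizes to the exact sequence
\[
	0 \to (\nu F_0)^* \xto{(\nu f_1)^*} (\nu F_1)^* \xto{(\nu f_2)^*} (\nu F_2)^* \to M^*\Shift{\epsilon} \to 0.
\]
Reading off the first three terms, $(\nu f_1)^*$ is injective and its image equals $\ker (\nu f_2)^*$. Identifying $(\nu F_0)^*$ with its image under the injection $(\nu f_1)^*$ inside $(\nu F_1)^*$ then gives the required equality $(\nu F_0)^* = \ker (\nu f_2)^*$.

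There is no serious obstacle: the statement is really just the observation that the penultimate term of a resolution is the kernel of the next map, transported from the free side to the injective side via the Nakayama/Calabi--Yau correspondence and then back to the free side via $(-)^*$. The only bookkeeping to watch is the shift $\Shift{\epsilon}$ in \cref{thm:calabi-yau for modules}; since it is applied uniformly and commutes with both $\nu$ and $(-)^*$ (up to the sign convention $M\Shift{z}^* = M^*\Shift{-z}$), it can be removed without affecting the exactness at $\nu F_1$ and $\nu F_0$ that the argument actually uses.
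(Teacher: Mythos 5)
Your proof is correct and takes essentially the same route as the paper: invoke Theorem~\ref{thm:calabi-yau} (equivalently Corollary~\ref{thm:calabi-yau for modules}) to conclude that $\nu F_2 \xto{\nu f_2} \nu F_1 \xto{\nu f_1} \nu F_0 \to 0$ is exact, then dualize using exactness of $(-)^*$ to read off $(\nu F_0)^* = \ker(\nu f_2)^*$; your additional bookkeeping about the $\Shift{\epsilon}$ shift and the explicit augmentation term $M\Shift{-\epsilon}$ is consistent with the paper's conventions and does not change the argument.
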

\begin{proof}
	The sequence
	\(0 \to F_2 \xto{f_2} F_1 \xto{f_1} F_0\)
	is exact.
	By \cref{thm:calabi-yau}, the sequence
	\(
		\nu F_2 \xto{\nu f_2} \nu F_1 \xto{\nu f_1} \nu F_0 \to 0
	\)
	and therefore also the dual sequence
	\(
		0 \to (\nu F_0)^* \xto{(\nu f_1)^*}  (\nu F_1)^* \xto{(\nu f_2)^*} (\nu F_2)^*
	\)
	are exact.
\end{proof}

Thus, if a matrix $[f_2]$ representing $f_2$ is known, then the matrix $[f_1]^T = [(\nu f_1)^*] $ can be computed by applying the LW-Algorithm (\cref{alg:lw-algorithm}) to $[f_2]^T = [(\nu f_2)^*]$.
In particular, if $H_d(C^\bullet)$ has finite total dimension, then so has $H^{d+2}(N^\bullet(K_*))$,
so \cref{thm:extend resolution to right} can be applied to the free resolution \cref{eq:free resolution cohomology} of $H^{d+2}(N^\bullet(K_*))$.
This shows that $[p^{d+1}]^T$ can be computed by applying the LW-Algorithm to $[i^{d+1}]^T$.

\begin{algorithm}[tbp]
	\caption{Computes a minimal free resolution of $H^\bullet(C^\bullet)$ for a cochain complex $C^\bullet$ of free $\ZZ$-modules, using clearing.}
	\label{alg:cohomology resolution}
	\KwIn{Graded matrices $[\d^\bullet]$ representing $C^\bullet$.}
	\KwOut{Pairs of graded matrices representing a free resolution of $H^d(C^\bullet)$ for $d = 0, 1, \dotsc$.}
	$q \gets \emptyset$\tcp*[r]{pivots for clearing}
	\For{$d = 0, 1, \dotsc$}{
		\lFor(\tcp*[f]{clearing}){$j \in q$}{$[\d^{d+1}]_j \gets 0$}
		$[i^{d+1}] \gets \Bireduce{[\d^{d+1}]}$\;
		$n \gets \# \text{columns of } [i^{d+1}]$\;
		$q \gets \Set{\piv [i^{d+1}]_1, \dotsc, \piv [i^{d+1}]_n}$\;
		$[i^{d+1}]^T, [p^{d+1}]^T \gets \MgsWithKer{[i^{d+1}]^T}$ \tcp*[r]{See \cref{alg:ker-mgs}}
		\Yield $\MinimizeChain{[i^{d+1}], [p^{d+1}]}$ \tcp*[r]{\MFR\ of $H^{d+2}(\nu C^\bullet\Shift{\epsilon}) \cong H^d(C^\bullet)$}
	}
\end{algorithm}

\begin{corollary}
	\label{thm:cohomology computation}
	Let $C^\bullet$ be a cochain complex of free modules such that $\dim H^\bullet(C^\bullet)$ is finite.
	Then \cref{alg:cohomology resolution} computes free resolutions of $H^d(C^\bullet)$.
\end{corollary}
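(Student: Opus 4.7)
The plan is to verify by induction on $d$ that \cref{alg:cohomology resolution} produces a valid free resolution at each iteration, and that the clearing optimization between iterations preserves the output.

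First, I would show that \Bireduce{\cdot} produces a graded matrix $[i^{d+1}]$ whose nonzero columns form a basis of $Z^{d+1}(C^\bullet)$. Since $\dim H^\bullet(C^\bullet)$ is finite, $\colim H^{d+1}(C^\bullet) = 0$, so $\colim Z^{d+1} = \colim \im \d^{d+1}$, and thus the columns of $[\d^{d+1}]$ generate $\colim Z^{d+1}$ as a vector space. Applying \cref{thm:pullback of kernels} to $\d^{d+2}\colon C^{d+1} \to C^{d+2}$, whose codomain is free, yields $Z^{d+1} = \ker \d^{d+2} = [\colim Z^{d+1}]_{C^{d+1}}$. \cref{thm:correctness of bireduction algorithm} then guarantees that \Bireduce{\cdot} outputs a basis of this pullback.

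Next, I would justify the call to \MgsWithKer{\cdot} via \cref{thm:extend resolution to right}. The sequence $0 \to Z^{d+1} \xto{i^{d+1}} C^{d+1} \xto{p^{d+1}} Z^{d+2}$ is a free resolution of $H^{d+2}(C^\bullet)$, which has finite total dimension, so that lemma yields the exact sequence $0 \to (\nu Z^{d+2})^* \xto{(\nu p^{d+1})^*} (\nu C^{d+1})^* \xto{(\nu i^{d+1})^*} (\nu Z^{d+1})^*$. By \cref{thm:nakayama on matrices} and the definition of the graded transpose, these morphisms are represented by $[p^{d+1}]^T$ and $[i^{d+1}]^T$ respectively. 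Since \MgsWithKer{\cdot} (\cref{sec:lw-algorithm}) computes both a minimal generating set of its input's image and a basis of the kernel, applying it to $[i^{d+1}]^T$ returns $[p^{d+1}]^T$ as the kernel matrix, so $([i^{d+1}], [p^{d+1}])$ represents a free resolution of the relevant cohomology module. \MinimizeChain{\cdot} (see \cref{rmk:minimization}) then converts this into a minimal free resolution, which by \cref{thm:calabi-yau} corresponds to an \MFR\ of $H^d(C^\bullet)$.

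The remaining task is to justify clearing. For each $i \in q$ inherited from the previous iteration, $i = \piv[i^d]_k$ for some column index $k$, so the $k$-th column of $[i^d]$ represents a cocycle of the form $\alpha e_i + \ell$ with $\alpha \neq 0$ and $\ell$ a combination of basis elements of $C^d$ of strictly smaller index. Since $\d^{d+1}(\alpha e_i + \ell) = 0$, column $i$ of $[\d^{d+1}]$ equals $-\alpha^{-1}\d^{d+1}(\ell)$, a linear combination of columns of $[\d^{d+1}]$ with strictly smaller indices; it is therefore redundant for generating $\colim \im \d^{d+1}$, and zeroing it preserves $\colim Z^{d+1}$ and hence the output of \Bireduce{\cdot}. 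The main obstacle I anticipate is reconciling the bigraded structure with this linear-algebra redundancy argument: one must verify that the implicit column combinations justifying clearing respect the grade constraints of \cref{thm:graded matrices free modules}, and that the pivot recorded in $q$—well-defined because \Bireduce{\cdot}'s nonzero output columns have pairwise distinct lex- and colex-pivots—is compatible with this analysis in every grade.
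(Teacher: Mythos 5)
Your reconstruction matches the paper's intended argument: the corollary is presented as a consequence of the discussion preceding it in \cref{sec:computing cohomology} (which invokes \cref{thm:pullback of kernels}, \cref{thm:correctness of bireduction algorithm}, and \cref{thm:extend resolution to right} in exactly the way you do), and the paper gives no separate formal proof. You also correctly supply the justification for clearing, which the paper relegates to an informal remark. Two small remarks on the details.

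First, your clearing argument shows that each individual column $j\in q$ of $[\d^{d+1}]$ lies in the span of columns of smaller index, but it does not immediately follow that all cleared columns are \emph{simultaneously} redundant: the smaller-index columns expressing column $j$ may themselves be cleared. This is fixed by a short induction on $j\in q$ in increasing order (the smallest cleared index is expressible via non-cleared columns, and so on); it is worth making explicit.

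Second, the obstacle you anticipate at the end is not one. The clearing argument, and the correctness criterion it must preserve, live entirely in $\colim C^{d+1}$, where the bigrading is forgotten: by \cref{thm:correctness of bireduction algorithm}, the output module of \Bireduce{\cdot} depends only on the subspace $V=\colim\im\d^{d+1}\subseteq\colim C^{d+1}$, and your redundancy argument shows exactly that clearing does not change $V$. There are no grade constraints to reconcile, because the column combinations witnessing redundancy are never required to be morphisms of graded modules; they are only linear relations over $k$ in the colimit. Relatedly, the pivot stored in $q$ is the ordinary pivot (largest nonzero row index), not the lex- or colex-pivot used internally by \Bireduce{\cdot}; the argument does not require these pivots to be pairwise distinct.
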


\begin{remark}[Clearing]
	In general, $[\d^{d+1}]$ is not injective.
	As in one-parameter persistent cohomology,
	the first loop in \cref{alg:bigraded reduction} spends a significant amount of time on reducing the columns of $[\d^{d+1}]$ that are eventually reduced to zero.
	The computation can be accelerated considerably by using the pivots of the reduced matrix $[\d^d]$ to implement a clearing scheme
	before invoking \cref{alg:bigraded reduction}.
	This is implemented in \cref{alg:cohomology resolution}.
\end{remark}

\section{Experiments}
We have implemented our cohomology algorithm in C++ \cite{Lenzen:2023}.
We have also implemented the alogrithm \cite{FugacciKerberEtAl:2023} used in \texttt{mpfree} \cite{Kerber:2021}, in order to vary the implementation details.
Where applicable, the run time of our clone is similar to the one of \texttt{mpfree}.
We have run our implementation to compute \MFR{}s of the \PH{} of various function-Rips bifiltrations.

\subsection{Setup}
All computations are done with coefficients in $k = \mathbf{F}_2$.
Matrix columns are implemented as binary heaps \cite{BauerKerberEtAl:2017}.
Our code also implements an alternative representation of columns as dynamically allocated arrays.
We have run our code on a MacBook Pro 2017 with a 2.3 GHz Dual-Core Intel Core i5 and 16GB RAM.
The code is compiled using clang++ 15.0.7.  Each instance of our program may run four threads in parallel.

The run time of the homology algorithm for minimal presentation computation  \cite{FugacciKerberEtAl:2023} is dominated by chunk preprocessing and the LW-Algorithm.
While it is standard to implement chunk preprocessing in an embarrassingly parallel way, no way is known to parallelize the LW-Algorithm.
While the minimization step in \cref{alg:cohomology resolution} is parallelized in our implementation, the bigraded reduction (\cref{alg:bigraded reduction}) is not, although we hypothesize it could be parallelized analogously to \cite{MorozovNigmetov:2020} or \cite{BauerKerberEtAl:2014}.
We found that the minimization is not a performance bottleneck of our algorithm, so one would expect similar performance on a single core.

\subparagraph*{Datasets}
We have generated point clouds $S$ by sampling $n$-spheres $S^n$, $n$-tori $S^1 \times \dotsb \times S^1$ and orthogonal groups $O(n)$.
Additionally, we use some of the point clouds from \cite{OtterPorterEtAl:2017}.
To each point $p \in S$, we associate the value $\sum_{q \in S \setminus \{p\}} \exp\bigl(-\frac{d(p, q)^2}{2\sigma^2}\bigr)$ for a manually chosen parameter~$\sigma$.
These values and a distance matrix are written to a file, from which the program generates the coboundary matrices of the associated full function-Rips bifiltration.

\subparagraph*{Chunk preprocessing}
The LW-Algorithm works most efficiently if combined with chunk preprocessing \cite{FugacciKerber:2019a,FugacciKerberEtAl:2023}; this is the approach implemented in \texttt{mpfree}.
Chunk preprocessing (\cref{algo:chain-chunk}) applies a certain column operation scheme to the matrices representing the chain complex $C_\bullet$.
As an alternative, we propose to manipulate $C_\bullet(K_*)$ by row operations.
Equivalently, one can see this procedure as column operations on the matrices representing the cochain complex $\nu C^\bullet(K_*)$;
hence, we refer to this approach as \emph{cochain complex chunk preprocessing}; see \cref{algo:cochain-chunk}.

\subparagraph*{Coning off}
To ensure that $H^{d+2}(N^\bullet(K_*)) \cong H_d(K_*)^*$, \cref{thm:cohomology computation} requires that $C_\bullet$ has homology of finite total dimension.
Therefore, our implementation offers the ability to cone off the complex as follows.
Let $C_\bullet$ be a chain complex of free $\ZZ$-persistence modules.
The assignment $C'_d\colon y \mapsto \colim_{x \in \Z} (C_d)_{xy}$ defines a chain complex $C'_\bullet$ of free $\Z$-persistence modules.
We compute the barcode of $H_d(C'_\bullet)$ using the cohomological standard algorithm with clearing (\cref{algo:1D persistence clearing}).
This can be used to implement a clearing mechanism in the homological standard algorithm,
which we use to compute representatives for the homology classes in the barcode of $H_d(C'_\bullet)$.
Let $y_0$ such that $y_0 \geq g_y(\sigma)$ for all $\sigma$, where $g(\sigma) = (g_x(\sigma), g_y(\sigma))$.
Let $\hat{C}_\bullet = C_\bullet$.
For every bar $(b, d)$ of $H_d(C'_\bullet)$ of non-zero length represented by a $q$-cycle $c \in C'_q$ of grade $g(c) = b$,
we add a basis element $\hat{c}$ of grade $g(\hat{c}) = (b, y_0)$ to $\hat{C}_{q+1}$ with $\partial_{q+1}(\hat{c}) = c$.
If $d < \infty$, then $c$ bounds a chain $c'$ with $g(c') = d$,
and we add a basis element $\hat{c}'$ of grade $g(\hat{c}') = (d, y_0)$ to $\hat{C}_{q+2}$ with $\partial_{q+2}(\hat{c}') = c' - \hat{c}$.
The resulting chain complex $\hat{C}_\bullet$ satisfies $H_\bullet(\hat{C}_\bullet)_{xy} = 0$ for $y \geq y_0$.
If not stated otherwise, cohomology computation is done with this preprocessing applied to the density parameter.

\subparagraph{Sparsification}
We observe that the second for-loop in \cref{alg:bigraded reduction} runs considerably longer than the first.
The loop also increases the matrix density, which many incur a high cost on the subsequent steps in \cref{alg:cohomology resolution}.
For an interpretation, see \cref{rmk:run time loops}.
As a remedy, we have added a step that decreases the sparsity of the matrix using row operations that are compatible with the column sparse matrix implementation.
Specifically, if a row contains only a single entry, any row addition from this row to another affects only a single entry.
Therefore, an entry in a row with grade $g$ can be eliminated directly if there is a row with grade $g' \geq g$ containing only a single entry in the same column;
see \cref{algo:sparsification}.
All cohomology computation run times are reported with this sparsification scheme applied.

\subsection{Results}
\begin{table}
	\caption{%
		Run times (in milliseconds) comparing our implementation of \cite{LesnickWright:2022,FugacciKerberEtAl:2023} (including chunk preprocessing) and our cohomology algorithm,
		applied a density-Rips filtration on $300$ vertices ($d=1$), $100$ vertices ($d=2$) and $60$ vertices ($d=3$).
		RSS is peak resident memory as measured by \texttt{time}.
		Speed up is the run time of the homology computation (including chunk preprocessing), divided by the run time of the cohomology algorithm.
		The program has been killed after exceeding five minutes of run time.
	}
	\label{tab:results compact}
	\centering
	\begin{tabular}{rr|rrrr|rrr}
		\toprule
		$d$ & sample & chunk   & $H_d$  & sum     & RSS        & $H^{d+2}$ & RSS       & speedup \\ \midrule
		1   & c. elegans & 5,457   & 40,841 & 46,298  & 6,423,600  & 119,444   & 6,526,976 & 0.39    \\
		& 2-torus    & 11,480  & 19,875 & 31,355  & 6,620,404  & 5,032     & 2,827,912 & 6.23    \\
		& 4-torus    & 6,342   & 28,627 & 34,969  & 5,916,816  & 50,607    & 3,384,472 & 0.69    \\
		& dragon     & 9,721   & 18,489 & 28,210  & 5,774,492  & 5,064     & 2,829,620 & 5.57    \\
		& 2-sphere   & 8,657   & 29,180 & 37,837  & 6,421,312  & 25,021    & 4,098,268 & 1.51    \\
		& 4-sphere   & 7,699   & 33,619 & 41,318  & 6,642,260  & 47,355    & 7,154,632 & 0.87    \\
		& $O(3)$         & 7,023   & 33,874 & 40,897  & 6,315,708  & 42,702    & 3,816,124 & 0.96    \\ \midrule
		2   & c. elegans & 28,583  & 7,484  & 36,067  & 4,428,440  & 5,655     & 2,371,324 & 6.38    \\
		& 2-torus    & 39,630  & 2,191  & 41,821  & 3,216,372  & 5,054     & 2,334,712 & 8.27    \\
		& 4-torus    & 33,788  & 19,875 & 53,663  & 5,538,232  & 5,969     & 2,425,136 & 8.99    \\
		& dragon     & 19,023  & 2,379  & 21,402  & 2,557,124  & 5,188     & 2,367,488 & 4.13    \\
		& 2-sphere   & 32,611  & 12,416 & 45,027  & 5,099,604  & 6,417     & 2,426,924 & 7.02    \\
		& 4-sphere   & 29,272  & 25,357 & 54,629  & 6,039,576  & 8,637     & 2,505,664 & 6.32    \\
		& $O(3)$         & 31,780  & 29,123 & 60,903  & 6,654,692  & 6,796     & 2,445,996 & 8.96    \\ \midrule
		3   & c. elegans & 38,349  & 2,393  & 40,742  & 3,515,708  & 8,984     & 4,227,820 & 4.53    \\
		& 2-torus    & >300,000 & – & – & 7,141,648  & 11,725    & 5,072,192 & >25.59   \\
		& 4-torus    & >300,000 & – & – & 10,843,356 & 9,930     & 4,358,580 & >30.21   \\
		& dragon     & 67,463  & 2,334   & 69,797  & 6,666,732  & 9,782     & 4,900,800 & 7.14    \\
		& 2-sphere   & 59,385  & 3,110   & 62,495  & 4,280,112  & 9,051     & 4,185,036 & 6.90    \\
		& 4-sphere   & 92,365  & 8,577  & 100,942 & 5,526,344  & 8,818     & 4,197,636 & 11.45   \\
		& $O(3)$         & 204,263 & 25,284 & 229,547 & 7,966,600  & 10,851    & 4,365,864 & 21.15   \\ \bottomrule
	\end{tabular}
\end{table}
An overview of the results is given in \cref{tab:results compact}.
We report only the time needed to compute the \MFR\ and, if applicable, to apply the chunk preprocessing.
We do not report the time necessary to set up the (co)boundary matrices.
In all cases with $d \geq 2$, computing $H^{d+2}(N^\bullet)$ (without chunk preprocessing) was faster than computing $H_d$ with chunk preprocessing.
Our cohomology approach does not benefit from chunk preprocessing.
The speedup of the cohomology approach increases with dimension.
For two instances, computation of $H_d(K_*)$ did not terminate within five minutes, while computing $H^{d+2}(N^\bullet(K_*))$ was no problem.
We also observe that the cohomology algorithm uses less memory for almost all instances with $d \geq 2$.

\subparagraph{Matrix representations}
The efficiency of the LW-Algorithm and of chunk preprocessing does not vary very much depending on the matrix implementation; see \cref{tab:results compact,tab:results vectors}.
In contrast, our cohomology algorithm runs faster in the implementation with binary heaps.
We observe that the vector based implementations generally use less memory than the heap based ones.
This happens because, in contrast to vectors, heaps may contain multiple entries for the same row index.

\subparagraph{Cochain chunk preprocessing}
For the homology computation, the cochain complex chunk preprocessing described above often is more efficient than chunk preprocessing if combined with the heap implementation of matrix columns, see \cref{tab:cochain complex chunk}.
This is true in particular for higher homology dimensions.
If combined with vector-based matrices, cochain chunk preprocessing is less efficient than conventional chunk preprocessing in almost all cases, and does not terminate at all within five minutes.

\bibliographystyle{plainurl}%
\bibliography{submission.bib}

\begin{thebibliography}{10}

\bibitem{AggarwalPeriwal:2021}
Manu Aggarwal and Vipul Periwal.
\newblock Dory: {{Overcoming Barriers}} to {{Computing Persistent Homology}},
  March 2021.
\newblock \href {https://arxiv.org/abs/2103.05608} {\path{arXiv:2103.05608}}.

\bibitem{AlonsoKerberEtAl:2022}
Ángel~Javier Alonso, Michael Kerber, and Siddharth Pritam.
\newblock Filtration-domination in bifiltered graphs.
\newblock In {\em 2023 Proceedings of the Symposium on Algorithm Engineering
  and Experiments (ALENEX)}, pages 27--38, 2023.
\newblock \href {https://doi.org/10.1137/1.9781611977561.ch3}
  {\path{doi:10.1137/1.9781611977561.ch3}}.

\bibitem{AssemSimsonEtAl:2006}
Ibrahim Assem, Daniel Simson, and Andrzej Skowronski.
\newblock {\em Elements of the {{Representation Theory}} of {{Associative
  Algebras}}}, volume~1 of {\em London {{Mathematical Society Student Texts}}}.
\newblock {Cambridge University Press}, {Cambridge}, 2006.

\bibitem{Bauer:2019}
Ulrich Bauer.
\newblock Ripser: efficient computation of {V}ietoris--{R}ips persistence
  barcodes.
\newblock {\em J. Appl. Comput. Topol.}, 5(3):391--423, 2021.
\newblock \href {https://doi.org/10.1007/s41468-021-00071-5}
  {\path{doi:10.1007/s41468-021-00071-5}}.

\bibitem{BauerKerberEtAl:2014b}
Ulrich Bauer, Michael Kerber, and Jan Reininghaus.
\newblock Clear and {{Compress}}: {{Computing Persistent Homology}} in
  {{Chunks}}.
\newblock In Peer-Timo Bremer, Ingrid Hotz, Valerio Pascucci, and Ronald
  Peikert, editors, {\em Topological {{Methods}} in {{Data Analysis}} and
  {{Visualization III}}}, pages 103--117. {Springer International Publishing},
  {Cham}, 2014.
\newblock \href {https://doi.org/10.1007/978-3-319-04099-8_7}
  {\path{doi:10.1007/978-3-319-04099-8_7}}.

\bibitem{BauerKerberEtAl:2014}
Ulrich Bauer, Michael Kerber, and Jan Reininghaus.
\newblock Distributed {{Computation}} of {{Persistent Homology}}.
\newblock In Catherine~C. McGeoch and Ulrich Meyer, editors, {\em 2014
  {{Proceedings}} of the {{Sixteenth Workshop}} on {{Algorithm Engineering}}
  and {{Experiments}} ({{ALENEX}})}, pages 31--38. {Society for Industrial and
  Applied Mathematics}, {Philadelphia, PA}, May 2014.
\newblock \href {https://doi.org/10.1137/1.9781611973198.4}
  {\path{doi:10.1137/1.9781611973198.4}}.

\bibitem{BauerKerberEtAl:2017}
Ulrich Bauer, Michael Kerber, Jan Reininghaus, and Hubert Wagner.
\newblock Phat---persistent homology algorithms toolbox.
\newblock {\em J. Symbolic Comput.}, 78:76--90, 2017.
\newblock \href {https://doi.org/10.1016/j.jsc.2016.03.008}
  {\path{doi:10.1016/j.jsc.2016.03.008}}.

\bibitem{BauerLesnick:2015}
Ulrich Bauer and Michael Lesnick.
\newblock Induced matchings and the algebraic stability of persistence
  barcodes.
\newblock {\em Journal of Computational Geometry}, 6(2):162--191, March 2015.
\newblock \href {https://doi.org/10.20382/jocg.v6i2a9}
  {\path{doi:10.20382/jocg.v6i2a9}}.

\bibitem{BauerSchmahl:2021a}
Ulrich Bauer and Maximilian Schmahl.
\newblock Lifespan {{Functors}} and {{Natural Dualities}} in {{Persistent
  Homology}}, October 2021.
\newblock To appear in Homology, Homotopy and Applications.
\newblock \href {https://arxiv.org/abs/2012.12881} {\path{arXiv:2012.12881}}.

\bibitem{benjamin2022multiscale}
Katherine Benjamin, Aneesha Bhandari, Zhouchun Shang, Yanan Xing, Yanru An,
  Nannan Zhang, Yong Hou, Ulrike Tillmann, Katherine~R Bull, and Heather~A
  Harrington.
\newblock Multiscale topology classifies and quantifies cell types in
  subcellular spatial transcriptomics.
\newblock {\em arXiv preprint arXiv:2212.06505}, 2022.

\bibitem{BlumbergLesnick:2022a}
Andrew~J. Blumberg and Michael Lesnick.
\newblock Stability of 2-{{Parameter Persistent Homology}}.
\newblock {\em Foundations of Computational Mathematics}, October 2022.
\newblock \href {https://doi.org/10.1007/s10208-022-09576-6}
  {\path{doi:10.1007/s10208-022-09576-6}}.

\bibitem{BotnanLesnick:2022}
Magnus~Bakke Botnan and Michael Lesnick.
\newblock An {{Introduction}} to {{Multiparameter Persistence}}, March 2022.
\newblock \href {https://arxiv.org/abs/2203.14289} {\path{arXiv:2203.14289}}.

\bibitem{CarlssonZomorodian:2009}
Gunnar Carlsson and Afra Zomorodian.
\newblock The {{Theory}} of {{Multidimensional Persistence}}.
\newblock {\em Discrete \& Computational Geometry}, 42(1):71--93, July 2009.
\newblock \href {https://doi.org/10.1007/s00454-009-9176-0}
  {\path{doi:10.1007/s00454-009-9176-0}}.

\bibitem{ChenKerber:2011}
Chao Chen and Michael Kerber.
\newblock Persistent {{Homology Computation}} with a {{Twist}}.
\newblock In {\em 27th {{European Workshop}} on {{Computational Geometry}}},
  2011.
\newblock URL: \url{https://eurocg11.inf.ethz.ch/abstracts/22.pdf}.

\bibitem{Cohen-SteinerEdelsbrunnerEtAl:2007}
David {Cohen-Steiner}, Herbert Edelsbrunner, and John Harer.
\newblock Stability of persistence diagrams.
\newblock {\em Discrete \& Computational Geometry}, 37:103--120, January 2007.
\newblock \href {https://doi.org/10.1007/s00454-006-1276-5}
  {\path{doi:10.1007/s00454-006-1276-5}}.

\bibitem{CorbetKerberEtAl:2021a}
Ren{\'e} Corbet, Michael Kerber, Michael Lesnick, and Georg Osang.
\newblock Computing the multicover bifiltration.
\newblock In Kevin Buchin and {\'E}ric {Colin de Verdi{\`e}re}, editors, {\em
  37th International Symposium on Computational Geometry ({{SoCG}} 2021)},
  volume 189 of {\em Leibniz International Proceedings in Informatics
  ({{LIPIcs}})}, pages 27:1--27:17, {Dagstuhl, Germany}, 2021. {Schloss
  Dagstuhl \textendash{} Leibniz-Zentrum f\"ur Informatik}.
\newblock \href {https://doi.org/10.4230/LIPIcs.SoCG.2021.27}
  {\path{doi:10.4230/LIPIcs.SoCG.2021.27}}.

\bibitem{deSilvaMorozovEtAl:2011}
Vin {de Silva}, Dmitriy Morozov, and Mikael {Vejdemo-Johansson}.
\newblock Dualities in persistent (co)homology.
\newblock {\em Inverse Problems}, 27(12):124003, December 2011.
\newblock \href {https://doi.org/10.1088/0266-5611/27/12/124003}
  {\path{doi:10.1088/0266-5611/27/12/124003}}.

\bibitem{EdelsbrunnerLetscherEtAl:2002}
{Edelsbrunner}, {Letscher}, and {Zomorodian}.
\newblock Topological {{Persistence}} and {{Simplification}}.
\newblock {\em Discrete \& Computational Geometry}, 28(4):511--533, November
  2002.
\newblock \href {https://doi.org/10.1007/s00454-002-2885-2}
  {\path{doi:10.1007/s00454-002-2885-2}}.

\bibitem{EdelsbrunnerHarer:2008}
Herbert Edelsbrunner and John Harer.
\newblock Persistent homology\textemdash a survey.
\newblock In Jacob~E. Goodman, J{\'a}nos Pach, and Richard Pollack, editors,
  {\em Contemporary {{Mathematics}}}, volume 453, pages 257--282. {American
  Mathematical Society}, {Providence, Rhode Island}, 2008.
\newblock \href {https://doi.org/10.1090/conm/453/08802}
  {\path{doi:10.1090/conm/453/08802}}.

\bibitem{Eisenbud:1995}
David Eisenbud.
\newblock {\em Commutative {{Algebra}}}, volume 150 of {\em Graduate {{Texts}}
  in {{Mathematics}}}.
\newblock {Springer New York}, {New York, NY}, 1995.
\newblock \href {https://doi.org/10.1007/978-1-4612-5350-1}
  {\path{doi:10.1007/978-1-4612-5350-1}}.

\bibitem{FugacciKerber:2019a}
Ulderico Fugacci and Michael Kerber.
\newblock Chunk reduction for multi-parameter persistent homology.
\newblock In Gill Barequet and Yusu Wang, editors, {\em 35th International
  Symposium on Computational Geometry ({{SoCG}} 2019)}, volume 129 of {\em
  Leibniz International Proceedings in Informatics ({{LIPIcs}})}, pages
  37:1--37:14, {Dagstuhl, Germany}, 2019. {Schloss Dagstuhl\textendash
  Leibniz-Zentrum fuer Informatik}.
\newblock \href {https://doi.org/10.4230/LIPIcs.SoCG.2019.37}
  {\path{doi:10.4230/LIPIcs.SoCG.2019.37}}.

\bibitem{FugacciKerberEtAl:2023}
Ulderico Fugacci, Michael Kerber, and Alexander Rolle.
\newblock Compression for 2-parameter persistent homology.
\newblock {\em Computational Geometry}, 109:101940, February 2023.
\newblock \href {https://doi.org/10.1016/j.comgeo.2022.101940}
  {\path{doi:10.1016/j.comgeo.2022.101940}}.

\bibitem{Ginzburg:2007}
Victor Ginzburg.
\newblock {Calabi--Yau} algebras, 2007.
\newblock \href {https://arxiv.org/abs/math/0612139}
  {\path{arXiv:math/0612139}}.

\bibitem{giunti2021TDA}
Barbara Giunti and J{\=a}nis Lazovskis.
\newblock {{TDA-Applications}} ({{An}} online database of papers on
  applications of {{TDA}} outside of math), 2021.
\newblock URL: \url{https://www.zotero.org/groups/2425412/tda-applications}.

\bibitem{HenselmanGhrist:2017}
Gregory Henselman and Robert Ghrist.
\newblock Matroid {{Filtrations}} and {{Computational Persistent Homology}},
  October 2017.
\newblock \href {https://arxiv.org/abs/1606.00199} {\path{arXiv:1606.00199}}.

\bibitem{Henselman-Petrusek:2021}
Gregory {Henselman-Petrusek}.
\newblock Eirene, 2021.
\newblock URL: \url{https://github.com/Eetion/Eirene.jl}.

\bibitem{HoppnerLenzing:1981}
Michael H{\"o}ppner and Helmut Lenzing.
\newblock Projective diagrams over partially ordered sets are free.
\newblock {\em Journal of Pure and Applied Algebra}, 20(1):7--12, January 1981.
\newblock \href {https://doi.org/10.1016/0022-4049(81)90045-1}
  {\path{doi:10.1016/0022-4049(81)90045-1}}.

\bibitem{Keller:2008}
Bernhard Keller.
\newblock Calabi\textendash{{Yau}} triangulated categories.
\newblock In Andrzej Skowro{\'n}ski, editor, {\em Trends in {{Representation
  Theory}} of {{Algebras}} and {{Related Topics}}}, {{EMS Series}} of
  {{Congress Reports}}, pages 467--489. {EMS Press}, first edition, September
  2008.
\newblock \href {https://doi.org/10.4171/062-1/11}
  {\path{doi:10.4171/062-1/11}}.

\bibitem{kerber2020multi}
Michael Kerber.
\newblock Multi-parameter persistent homology is practical.
\newblock In {\em {{NeurIPS}} 2020 Workshop on Topological Data Analysis and
  Beyond}, 2020.
\newblock URL: \url{https://openreview.net/pdf?id=TDU6UycGYxR}.

\bibitem{Kerber:2021}
Michael Kerber.
\newblock {mpfree}, 2021.
\newblock URL: \url{https://bitbucket.org/mkerber/mpfree}.

\bibitem{KerberRolle:2021}
Michael Kerber and Alexander Rolle.
\newblock Fast {{Minimal Presentations}} of {{Bi-graded Persistence Modules}}.
\newblock In Martin {Farach-Colton} and Sabine Storandt, editors, {\em 2021
  {{Proceedings}} of the {{Symposium}} on {{Algorithm Engineering}} and
  {{Experiments}} ({{ALENEX}})}, Proceedings, pages 207--220. {Society for
  Industrial and Applied Mathematics}, January 2021.
\newblock \href {https://doi.org/10.1137/1.9781611976472.16}
  {\path{doi:10.1137/1.9781611976472.16}}.

\bibitem{Lenzen:2023}
Fabian Lenzen.
\newblock 2pac (2-parameter persistent cohomology). {{A}} {{C++}} software
  package for two-parameter persistent cohomology, 2023.
\newblock URL:
  \url{https://gitlab.com/flenzen/2-parameter-persistent-cohomology}.

\bibitem{LesnickWright:2015}
Michael Lesnick and Matthew Wright.
\newblock Interactive {{Visualization}} of 2-{{D Persistence Modules}},
  December 2015.
\newblock \href {https://arxiv.org/abs/1512.00180} {\path{arXiv:1512.00180}}.

\bibitem{LesnickWright:2022}
Michael Lesnick and Matthew Wright.
\newblock Computing {{Minimal Presentations}} and {{Bigraded Betti Numbers}} of
  2-{{Parameter Persistent Homology}}.
\newblock {\em SIAM Journal on Applied Algebra and Geometry}, 6(2):267--298,
  June 2022.
\newblock \href {https://doi.org/10.1137/20M1388425}
  {\path{doi:10.1137/20M1388425}}.

\bibitem{MorozovNigmetov:2020}
Dmitriy Morozov and Arnur Nigmetov.
\newblock Towards {{Lockfree Persistent Homology}}.
\newblock In {\em Proceedings of the 32nd {{ACM Symposium}} on {{Parallelism}}
  in {{Algorithms}} and {{Architectures}}}, pages 555--557, {Virtual Event
  USA}, July 2020. {ACM}.
\newblock \href {https://doi.org/10.1145/3350755.3400244}
  {\path{doi:10.1145/3350755.3400244}}.

\bibitem{OtterPorterEtAl:2017}
Nina Otter, Mason~A Porter, Ulrike Tillmann, Peter Grindrod, and Heather~A
  Harrington.
\newblock A roadmap for the computation of persistent homology.
\newblock {\em EPJ Data Science}, 6(1):17, December 2017.
\newblock \href {https://doi.org/10.1140/epjds/s13688-017-0109-5}
  {\path{doi:10.1140/epjds/s13688-017-0109-5}}.

\bibitem{Peeva:2011}
Irena Peeva.
\newblock {\em Graded {{Syzygies}}}.
\newblock {Springer London}, {London}, 2011.
\newblock \href {https://doi.org/10.1007/978-0-85729-177-6}
  {\path{doi:10.1007/978-0-85729-177-6}}.

\bibitem{PerezHaukeEtAl:2021}
Juli{\'a}n~Burella P{\'e}rez, Sydney Hauke, Umberto Lupo, Matteo Caorsi, and
  Alberto Dassatti.
\newblock {giotto-ph}: a {P}ython pibrary for high-performance computation of
  persistent homology of {V}ietoris--{R}ips filtrations, August 2021.
\newblock \href {https://arxiv.org/abs/2107.05412} {\path{arXiv:2107.05412}}.

\bibitem{Quillen:1976}
Daniel Quillen.
\newblock Projective modules over polynomial rings.
\newblock {\em Inventiones Mathematicae}, 36(1):167--171, December 1976.
\newblock \href {https://doi.org/10.1007/BF01390008}
  {\path{doi:10.1007/BF01390008}}.

\bibitem{rolle2020multiparameter}
Alexander Rolle.
\newblock Multi-parameter hierarchical clustering and beyond.
\newblock In {\em {{NeurIPS}} 2020 Workshop on Topological Data Analysis and
  Beyond}, 2020.
\newblock URL: \url{https://openreview.net/pdf?id=g0-tBxQTPRy}.

\bibitem{ScaramucciaIuricichEtAl:2020}
Sara Scaramuccia, Federico Iuricich, Leila De~Floriani, and Claudia Landi.
\newblock Computing multiparameter persistent homology through a discrete
  {{Morse-based}} approach.
\newblock {\em Computational Geometry}, 89:101623, August 2020.
\newblock \href {https://doi.org/10.1016/j.comgeo.2020.101623}
  {\path{doi:10.1016/j.comgeo.2020.101623}}.

\bibitem{Sheehy:2012}
Donald~R. Sheehy.
\newblock A multicover nerve for geometric inference.
\newblock In {\em Proceedings of the 24th Canadian Conference on Computational
  Geometry, {{CCCG}} 2012, Charlottetown, Prince Edward Island, Canada, August
  8-10, 2012}, pages 309--314, 2012.
\newblock URL: \url{http://2012.cccg.ca/papers/paper52.pdf}.

\bibitem{Suslin:1976}
Andrei Suslin.
\newblock Projective modules over polynomial rings are free.
\newblock {\em Soviet Mathematics}, 17(4):1160--1164, 1976.

\bibitem{gudhi:urm}
{The GUDHI Project}.
\newblock {\em {{GUDHI}} User and Reference Manual}.
\newblock {GUDHI Editorial Board}, 3.6.0 edition, 2022.
\newblock URL: \url{https://gudhi.inria.fr/doc/3.6.0/}.

\bibitem{vipond2021multiparameter}
Oliver Vipond, Joshua~A Bull, Philip~S Macklin, Ulrike Tillmann, Christopher~W
  Pugh, Helen~M Byrne, and Heather~A Harrington.
\newblock Multiparameter persistent homology landscapes identify immune cell
  spatial patterns in tumors.
\newblock {\em Proceedings of the National Academy of Sciences},
  118(41):e2102166118, 2021.

\bibitem{Webb:1985}
Cary Webb.
\newblock Decomposition of {{Graded Modules}}.
\newblock {\em Proceedings of the American Mathematical Society},
  94(4):565--571, 1985.
\newblock \href {https://doi.org/10.2307/2044864} {\path{doi:10.2307/2044864}}.

\bibitem{Weibel:2003}
Charles~A. Weibel.
\newblock {\em An Introduction to Homological Algebra}.
\newblock Number~38 in Cambridge Studies in Advanced Mathematics. {Cambridge
  Univ. Press}, {Cambridge}, reprint. 1997, transf. to digital print edition,
  2003.

\bibitem{ZhangXiaoEtAl:2020}
Simon Zhang, Mengbai Xiao, and Hao Wang.
\newblock {{GPU-Accelerated Computation}} of {{Vietoris--Rips Persistence
  Barcodes}}.
\newblock In Sergio Cabello and Danny~Z. Chen, editors, {\em 36th
  {{International Symposium}} on {{Computational Geometry}} ({{SoCG}} 2020)},
  volume 164 of {\em Leibniz {{International Proceedings}} in {{Informatics}}
  ({{LIPIcs}})}, pages 70:1--70:17, {Dagstuhl, Germany}, 2020. {Schloss
  Dagstuhl\textendash Leibniz-Zentrum f\"ur Informatik}.
\newblock \href {https://doi.org/10.4230/LIPIcs.SoCG.2020.70}
  {\path{doi:10.4230/LIPIcs.SoCG.2020.70}}.

\bibitem{ZomorodianCarlsson:2005}
Afra Zomorodian and Gunnar Carlsson.
\newblock Computing persistent homology.
\newblock {\em Discrete \& Computational Geometry}, 33:249--274, February 2005.
\newblock \href {https://doi.org/10.1007/s00454-004-1146-y}
  {\path{doi:10.1007/s00454-004-1146-y}}.

\end{thebibliography}

\iffullversion

\newpage

\appendix

\begin{table}
	\caption{%
		Same as \cref{tab:results compact}, but with matrices in vector format.
	}
	\label{tab:results vectors}
	\centering
	\begin{tabular}{rr|rrrr|rrr}
		\toprule
		$d$ & sample     & chunk   & $H_d$  & sum     & RSS       & $H^{d+2}$ & RSS          & speedup \\ \midrule
		1   & c. elegans & 7,148   & 78,592 & 85,740  & 4,576,160 & 207,787   & 4,795,428 & 0.41    \\
		& 2-torus    & 14,381  & 25,010 & 39,391  & 5,318,176 & 5,797     & 2,997,160 & 6.80    \\
		& 4-torus    & 7,028   & 39,563 & 46,591  & 4,538,204 & 17,483    & 3,149,484 & 2.66    \\
		& dragon     & 11,493  & 23,769 & 35,262  & 4,791,380 & 6,185     & 3,006,328 & 5.70    \\
		& 2-sphere   & 10,420  & 47,390 & 57,810  & 5,001,668 & 157,839   & 3,724,396 & 0.37    \\
		& 4-sphere   & 8,677   & 42,742 & 51,419  & 4,989,432 & >300.000 & –       & <0.17          \\
		& $O(3)$         & 8,397   & 53,641 & 62,038  & 4,874,744 & 85,476    & 3,548,940 & 0.73    \\ \midrule
		2   & c. elegans & 34,822  & 5,896  & 40,718  & 3,263,528 & 4,968     & 2,545,052 & 8.20    \\
		& 2-torus    & 22,772  & 1,830  & 24,602  & 2,026,840 & 5,679     & 2,527,568 & 4.33    \\
		& 4-torus    & 35,221  & 20,224 & 55,445  & 4,387,188 & 7,937     & 2,505,016 & 6.99    \\
		& dragon     & 24,914  & 2,338  & 27,252  & 2,057,320 & 4,979     & 2,511,392 & 5.47    \\
		& 2-sphere   & 35,732  & 8,952  & 44,684  & 4,122,656 & 14,455    & 2,546,872 & 3.09    \\
		& 4-sphere   & 36,476  & 24,779 & 61,255  & 4,792,056 & 22,485    & 2,566,200 & 2.72    \\
		& $O(3)$         & 35,287  & 21,767 & 57,054  & 4,482,248 & 9,477     & 2,525,464 & 6.02    \\ \midrule
		3   & c. elegans & 31,102  & 2,023  & 33,125  & 2,949,772 & 14,885    & 3,715,832 & 2.23    \\
		& 2-torus    & 35,045  & 2,460  & 37,505  & 3,021,448 & 10,924    & 4,623,784 & 3.43    \\
		& 4-torus    & 190,647 & 5,444  & 196,091 & 4,083,284 & 14,579    & 4,667,328 & 13.45   \\
		& dragon     & 33,626  & 2,029  & 35,655  & 2,997,148 & 10,564    & 4,602,516 & 3.38    \\
		& 2-sphere   & 127,959 & 3,040  & 130,999 & 3,328,716 & 10,436    & 4,610,180 & 12.55   \\
		& 4-sphere   & 172,314 & 7,343  & 179,657 & 4,506,468 & 12,118    & 4,665,124 & 14.83   \\
		& $O(3)$         & 222,455 & 9,461  & 231,916 & 4,795,464 & 13,840    & 4,683,228 & 16.76   \\ \bottomrule
	\end{tabular}
\end{table}
\begin{table}
	\caption{%
		Cochain complex chunk preprocessing is faster than chunk preprocessing if implemented with heap-based matrices.
		Speedup is the run time of homology with chunk preprocessing (see \cref{tab:results compact,tab:results vectors}),
		divided by the run time of homology with cochain complex chunk preprocessing.
	}
	\label{tab:cochain complex chunk}
	\centering
	\newcommand{\specialcell}[1]{\begin{tabular}[t]{@{}c@{}}#1\end{tabular}}
	\makebox[\linewidth][c]{
		\begin{tabular}{rr|rrrr|rrrr}
			\toprule
			&            & \multicolumn{4}{c|}{heaps}                 & \multicolumn{4}{c}{vectors}                \\
			$d$ & sample     & \specialcell{cochain\\chunk} & $H_d$  & sum    & speedup & \specialcell{cochain\\chunk} & $H_d$  & sum     & speedup \\\midrule
			1   & c. elegans & 2,994         & 41,158 & 44,152 & 1.05    & 6,771         & 82,732 & 89,503  & 0.96    \\
			& 2-torus    & 7,646         & 19,766 & 27,412 & 1.14    & 49,376        & 23,064 & 72,440  & 0.54    \\
			& 4-torus    & 3,662         & 28,792 & 32,454 & 1.08    & 7,770         & 41,340 & 49,110  & 0.95    \\
			& dragon     & 6,793         & 18,512 & 25,305 & 1.11    & 52,949        & 23,664 & 76,613  & 0.46    \\
			& 2-sphere   & 4,868         & 28,967 & 33,835 & 1.12    & 14,583        & 45,654 & 60,237  & 0.96    \\
			& 4-sphere   & 4,153         & 33,685 & 37,838 & 1.09    & 11,393        & 44,709 & 56,102  & 0.92    \\
			& $O(3)$         & 3,902         & 34,299 & 38,201 & 1.07    & 8,798         & 48,319 & 57,117  & 1.09    \\\midrule
			2   & c. elegans & 13,749        & 7,431  & 21,180 & 1.70    & >300,000      & –      & –       & <0.14       \\
			& 2-torus    & 4,937         & 2,218  & 7,155  & 5.85    & 100,607       & 2,009  & 102,616 & 0.24    \\
			& 4-torus    & 12,265        & 19,583 & 31,848 & 1.68    & 154,250       & 21,376 & 175,626 & 0.32    \\
			& dragon     & 4,202         & 2,259  & 6,461  & 3.31    & 160,389       & 4,481  & 164,870 & 0.17    \\
			& 2-sphere   & 13,006        & 11,297 & 24,303 & 1.85    & 203,685       & 10,390 & 214,075 & 0.21    \\
			& 4-sphere   & 14,520        & 25,726 & 40,246 & 1.36    & 177,485       & 23,683 & 201,168 & 0.30    \\
			& $O(3)$         & 12,800        & 27,704 & 40,504 & 1.50    & 180,528       & 29,686 & 210,214 & 0.27    \\\midrule
			3   & c. elegans & 2,078         & 2,033  & 4,111  & 9.91    & 38,301        & 1,981  & 40,282  & 0.82    \\
			& 2-torus    & 9,564         & 1,793  & 11,357 & 26.42   & 5,053         & 1,899  & 6,952   & 5.39    \\
			& 4-torus    & 33,826        & 9,901  & 43,727 & 6.86    & >300,000      & –      & –       & <0.65       \\
			& dragon     & 1,229         & 2,172  & 3,401  & 20.52   & 3,212         & 2,255  & 5,467   & 6.52    \\
			& 2-sphere   & 7,351         & 3,757  & 11,108 & 5.63    & 171,650       & 3,211  & 174,861 & 0.75    \\
			& 4-sphere   & 25,289        & 13,386 & 38,675 & 2.61    & >300,000      & –      & –       & <0.60       \\
			& $O(3)$         & 38,122        & 17,841 & 55,963 & 4.10    & >300,000      & –      & –       & <0.77      \\\bottomrule
		\end{tabular}
	}
\end{table}

\section{Further remarks}
\begin{remark}[Minimization]
	\label{rmk:minimization}
	Recall that a free resolution (or, more generally, a complex of free modules) $F_\bullet$ is called \emph{minimal} if it does not contain any homological ball;
	equivalently, all matrices $[\partial^F_\bullet]$ representing it satisfy $[\partial^F_\bullet]_{ij} = 0$ unless $\rg^{[\partial^F_\bullet]}_i < \cg^{[\partial^F_\bullet]}_j$; cf.\ \cref{thm:graded matrices free modules}.
	Assume that
	\[
		\begin{tikzcd}[ampersand replacement=\&, row sep=small]
			\dotsb \rar \& F_{d+1} \rar["\partial^F_{d+1}"] \dar[equal] \& F_d \rar["\partial^F_d"] \dar[equal] \& F_{d-1} \rar["\partial^F_{d-1}"] \dar[equal] \& F_{d-2} \dar[equal] \rar \& \dotsb \\
			\dotsb \rar \& A \rar[swap]{\Mtx{a\\b}} \& B \oplus C \rar[swap]{\Mtx{c & d \\ e & f}} \& D \oplus E \rar[swap]{(g, h)} \& G \rar[swap] \& \dotsb
		\end{tikzcd}
	\]
	for free modules $A,\dotsc, G$.
	If $c$ is invertible, then the summand $\dotsb \to 0 \to C \to D \to 0 \to \dotsb$ of $F_\bullet$ is isomorphic to a ball.
	In this case, first vertical map in the commutative diagram
	\begin{equation}
		\label{eq:minimize}
		\begin{tikzcd}[ampersand replacement=\&]
			\dotsb \rar \&
			A \rar["\Mtx{a \\ b}"] \dar[equal] \&[1cm]
			B \oplus C \rar["\Mtx{c & d \\ e & f}"] \dar["{\Mtx[\big]{c & 0 \\ e & f-ec^{-1}d}}"]
			\&[1cm]
			D \oplus E \rar["{(g, h)}"] \dar[equal] \&
			G \rar["\Mtx{g & h}"] \dar[equal] \&
			\dotsb \\
			\dotsb \rar \&
			A \rar["{\Mtx[\big]{a+dc^{-1}b \\ b}}"'] \dar[equal] \&
			B \oplus C' \rar["{\Mtx[\big]{c & 0 \\ e & f-ec^{-1}d}}"'] \dar["{(0, 1)}"'] \&
			D \oplus E \rar["{(g, h)}"] \dar["{(1, -ec^{-1})}"] \&
			G \dar[equal] \rar \&
			\dotsb \\
			\dotsb \rar \&
			A \rar["b"'] \&
			C' \rar["{f-ec^{-1}d}"'] \&
			E \rar["h"'] \&
			G \rar \&
			\dotsb.
		\end{tikzcd}
	\end{equation}
	is an isomorphism of complexes, and the second is a quasi-isomorphism.
	The entry $f - ec^{-1}d$ can be computed by performing column operations on $\Mtx{c & d \\ e & f}$.
	Spliting $d$-balls from $F_\bullet$ thus can be achieved in the following steps:
	\begin{enumerate}
		\item Performing invertible column operations, bring $[\partial^F_d]$ into the form $\Mtx[\big]{c & 0 \\ e & f-ec^{-1}d}$ for a maximal invertible matrix $c$.
		 Delete the rows of $\Mtx[\big]{c & 0 \\ e & f-ec^{-1}d}$ corresponding to $B$ and $D$;
			This is done by \cref{algo:minimize}.
		\item Delete rows of $[\partial^F_{d+1}]$ and columns of $[\partial^F_{d-1}]$ corresponding to $B$ and $D$, respectively.
	\end{enumerate}
	In particular, it is not necessary to perform any row operations on $[\partial^F_{d+1}]$.
	\Cref{algo:minimize} returns the matrix $f-ec^{-1}d$, together with the row indices $r$ and $c$ of the rows and columns of $[\partial^F_{d+1}]$ corresponding to $C$ and $E$.
	To split off balls from $F_\bullet$ in all dimensions, steps 1 and 2 have to be applied to all matrices $[\partial^F_1], [\partial^F_2], \dotsc$ subsequently.
	this is done by \cref{algo:chain-chunk} for chain complexes and \cref{algo:cochain-chunk} for cochain complexes.
\end{remark}

\begin{remark}[run time of the loops in \cref{alg:bigraded reduction}]
	\label{rmk:run time loops}
	The second for loop in \cref{alg:bigraded reduction} takes much more time than the first, even if the two are exchanged.
	To explain this, consider the graded matrix $D$ passed to \cref{alg:bigraded reduction}.
	Initially, rows and columns of $D$ are ordered colexicographically \wrt\ their grade.
	This order makes $D$ represent the coboundary morphism of a $\Z$-filtration, and we know that the standard algorithm (which the first for-loop is) is efficient on filtered complexes.
	Essentially, this is because (co)boundary matrices arising from filtered complexes are almost in echelon form already.
	Assume that $D'$ is the result of the first for-loop.
	Let $\rho, \tau$ be permutations such that $\rho D' \sigma$ has rows lexicographically sorted and columns sorted \wrt\ $\cpiv$.
	The second for-loop then is just the standard algorithm applied to $\rho D' \sigma$.
	One checks that $\rho D'\sigma$ cannot come from a non-trivial filtration, which explains the long run time.
\end{remark}

\renewcommand{\bottomfraction}{0.9}
\renewcommand{\floatpagefraction}{1}
\renewcommand{\textfraction}{0.01}

\raggedbottom
\section{Other algorithms}
\noindent
\begin{minipage}{\linewidth}
\begin{algorithm}[H]
	\caption{Standard algorithm for one-parameter persistent relative cohomology with clearing. If $\colim H^\bullet(K, K_*) = 0$, there are no infinite bars in the barcode.}
	\KwIn{Graded matrices $[\d^1], [\d^2],\dotsc$ with ascending rows and columns \wrt\ grade, representing a chain complex $C^\bullet(K, K_*)$ with $\colim H^\bullet(K, K_*) = 0$.}
	\KwOut{barcode of $H^\bullet(K_*)$.}
	$p' \gets \emptyset$\;
	$n \gets \#\mathrm{columns}([\d^{d+1}])$\;
	\For{$d = 0, 1, \dotsc$}{
		$p \gets 0 \in \N^{n}$\;
		\For{$j = 1,\dotsc, n$}{
			\lIf(\tcp*[f]{clearing}){$p'_j \neq 0$}{%
				$[\d^{d+1}]_j \gets 0$;
				\Continue%
			}
			\lWhile{$p_i \neq 0$ for $i = \piv [\d^{d+1}]_j$}{%
				$[\d^{d+1}]_j \gets [\d^{d+1}]_j + [\d^{d+1}]_{p_i}$%
			}
			$p_i \gets j$\;
			\Yield $(\rg^{[\d^{d+1}]}_i, \cg^{[\d^{d+1}]}_j)$\;
		}
		$p' \gets p$\;
	}
\label{algo:1D persistence clearing}
\end{algorithm}
\end{minipage}

\noindent
\begin{minipage}{\linewidth}
\begin{algorithm}[H]
	\caption{The LW-Algorithm \cite{KerberRolle:2021, LesnickWright:2022} computes a basis of the kernel of morphism of free $\ZZ$-persistence modules.}
	\KwIn{A graded $m \times n$-matrix $M$ representing a morphism of free $\ZZ$-persistence modules.}
	\KwResult{A graded matrix $K$ representing $\ker M$}
	\label{alg:lw-algorithm}
	\Fn{\Ker{$M$}}{
		$p \gets 0 \in \N^m$ \tcp*[r]{pivot row to column assignment}
		$Q \gets \Set{ (\cg^M_j, j) }$ as priority queue with $(z, j) \leq (z', j')$ if $z \lex\preceq z'$ or $z = z'$ and $j \leq j'$\;
		$V \gets $ $n \times n$-unit matrix \tcp*[r]{Reduction matrix}
		$K \gets $ empty graded $n \times 0$-matrix with $\rg^K = \cg^M$ \;
		\While{$Q \neq \emptyset$}{
			$(z, j) \gets \PopHeap{Q}$\;
			\Forever{}{
				$i \gets \piv M_j$\;
				\lIf{$i = 0$}{append $V_j$ to $K$ and $z$ to $\cg^K$; \Break}
				\lElseIf{$p_{i} = 0$}{$p_i \gets j$; \Break}
				\lElseIf{$\cg^M_{p_i} \not\leq z$}{%
					\PushHeap{Q, (\cg^M_{p_i} \vee z)};
					$p_i \gets j$;
					\Break
				}
				\Else{
					$M_j \gets M_j + M_{p_i}$\;
					$V_j \gets V_j + V_{p_i}$\;
				}
			}
		}
		\Return{$K$}\;
	}
\end{algorithm}
\end{minipage}

\noindent
\begin{minipage}{\linewidth}
\begin{algorithm}[H]
	\caption{A variant of \cref{alg:lw-algorithm} that computes a minimal generating system of the image of a morphism of free $\ZZ$-persistence modules, together with its kernel.}
	\KwIn{A graded $m \times n$-matrix $M$ representing a morphism of free $\ZZ$-persistence modules.}
	\KwResult{Graded matrices $M'$ representing a minimal generating system of $\im M$ and $K'$ representing a basis of $\ker M'$.}
	\label{alg:ker-mgs}
	\Fn{\MgsWithKer{$M$}}{
		$p \gets 0 \in \N^m$ \tcp*[r]{pivot row to column assignment}
		$Q \gets \Set{ (\cg^M_j, j) }$ as priority queue with $(z, j) \leq (z', j')$ if $z \lex\preceq z'$ or $z = z'$ and $j \leq j'$\;
		$M' \gets {}$graded $m \times 0$-matrix with $\rg^{M'} = \rg^M$\;
		$V' \gets {}$empty matrix\;
		$K' \gets {}$graded $0 \times 0$-matrix with $\rg^{K'} = \cg^{M'}$\;
		$m \gets 0 \in \N^n$ \tcp*[r]{index $M_j$ in $M'$}
		\While{$Q \neq \emptyset$}{
			$(z, j) \gets \PopHeap{Q}$\;
			\Forever{}{
				$i \gets \piv M_j$\;
				\lIf{$i = 0$}{append $V'_{m_j}$ to $K'$ and $z$ to $\cg^{K'}$; \Break}
				\lElseIf{$p_{i} = 0$}{$p_i \gets j$; \Break}
				\lElseIf{$\cg^M_{p_i} \not\leq z$}{\PushHeap{Q, (\cg^M_{p_i} \vee z)}; $p_i \gets j$; \Break}
				\Else{
					$M_j \gets M_j + M_{p_i}$\;
					\lIf{$z \neq \cg^M_j$}{$V'_{m_j} \gets V'_{m_j} + V'_{m_{p_i}}$}
				}
			}
			\If(\tcp*[f]{\smash{\parbox[t]{5cm}{$M_j$ cannot be reduced to zero at $\cg^{M}_j$ and therefore is an element of the min. gen. system.}}}){$M_j \neq 0$ and $z = \cg^M_j$}{
				$m_j \gets \text{\# columns in $M'$} + 1$\;
				$M'_{m_j} \gets M_j$ and $\cg^{M'}_{m_j} \gets z$\;
				$V' \gets \Mtx[\big]{V' & 0 \\ 0 & 1}$\;
				$K' \gets \Mtx[\big]{K' \\ 0}$ and $\rg^{K'}_{m_j} \gets z$\;
			}
		}
		\Return{$M', K'$}\;
	}
\end{algorithm}
\end{minipage}

\noindent
\begin{minipage}{\linewidth}
\begin{algorithm}[H]
	\KwIn{A graded $m \times n$-matrix $D$ representing a map of free modules \cite{FugacciKerber:2019a}.}
	\KwOut{A triple $(D', r, c)$.}
	\caption{Splits off direct summands from $D$ that are isomorphic to a ball. The lists $r$ and $c$ are the row and column indices of $D$ that representing the summands $C'$ and $F$ in \eqref{eq:minimize}.}
	\label{algo:minimize}
	\Fn{\Minimize{D'}}{
		$p \gets 0 \in \mathbf{Z}^m$\;
		\For{$j = 1,\dotsc,n$}{
			$i \gets \piv (D_j)$\;
			\lIf{$\rg^D_i \neq \cg^D_j$}{$c \gets c \cup \{j\}$; \Break}
			\lElseIf{$p_i = 0$}{$p_i \gets j$; break}
			\lElse{$D_j \gets D_j + D_{p_i}$}
		}
		\For(\tcp*[f]{embarrassingly parallel}){$j \in c$}{
			\While{$S \coloneqq \Set{i ; D_{ij} \neq 0 \wedge p_i \neq 0}$ is non-empty}{
				$D_j \gets D_j + D_{p_{\max S}}$
			}
		}
		$r \gets \Set{i \leq m ; p_i = 0}$\;
		$D' \gets (D_{ij})_{i \in r, j \in c}$\;
		\Return{$(D', r, c)$}\;
	}
\end{algorithm}
\end{minipage}

\noindent
\begin{minipage}{\linewidth}
\begin{algorithm}[H]
	\caption{Chunk algorithm for chain complexes; see \cref{rmk:minimization}.}
	\label{algo:chain-chunk}
	\KwIn{Graded matrices $[\partial_1], [\partial_2], \dotsc$ forming a chain complex $C_\bullet$ of free $\ZZ$-modules.}
	\KwOut{Graded matrices $[\partial'_1], [\partial'_2], \dotsc$ forming a minimal chain complex $C'_\bullet \simeq C_\bullet$.}
	\Fn{\MinimizeChain{D_\bullet}}{
		$[\partial'_1], r, c \gets \Minimize{[\partial_1]}$\;
		\For{$d = 2, \dotsc$}{
			\settowidth{\dimen0}{$[\partial'_d], r, c$}
			$\mathmakebox[\dimen0][l]{[\partial'_d]} \gets ([\partial_d]_{ij})_{i \in r}$\;
			$\mathmakebox[\dimen0][l]{[\partial'_d], r, c} \gets \Minimize{[\partial'_d]}$\;
			$\mathmakebox[\dimen0][l]{[\partial'_{d-1}]} \gets ([[\partial'_{d-1}])_{ij})_{j \in r}$\;
		}
		\Return $[\partial'_\bullet]$\;
	}
\end{algorithm}
\end{minipage}

\noindent
\begin{minipage}{\linewidth}
\begin{algorithm}[H]
	\caption{Chunk algorithm for cochain complexes; see \cref{rmk:minimization}.}
	\KwIn{Graded matrices $[\d^1], [\d^2], \dotsc$ forming a cochain complex $C^\bullet$ of free $\ZZ$-modules.}
	\KwOut{Graded matrices $[\d'^1], [\d'^2], \dotsc$ forming a minimal cochain complex $C'^\bullet \simeq C^\bullet$.}
	\label{algo:cochain-chunk}
	\Fn{\MinimizeCochain{[\d^\bullet]}}{
		$[\d'^1], r, c \gets \Minimize{[\d^1]}$\;
		\For{$d = 2, \dotsc$}{
			\settowidth{\dimen0}{$[\d'^d], r, c$}
			$\mathmakebox[\dimen0][l]{[\d'^d]} \gets ([\d^d]_{ij})_{j \in c}$\;
			$\mathmakebox[\dimen0][l]{[\d'^d], r, c} \gets \Minimize{[\d'^d]}$\;
			$\mathmakebox[\dimen0][l]{[\d'^{d-1}]} \gets ([\d'^{d-1}])_{ij})_{i \in c}$\;
		}
		\Return $[\d'^\bullet]$\;
	}
\end{algorithm}
\end{minipage}

\noindent
\begin{minipage}{\linewidth}
\begin{algorithm}[H]
	\caption{Factorize a graded matrix through another.}
	\label{algo:factorize}
	\KwIn{A $l \times m$-matrix $L$ in echelon form, and a $l \times n$-matrix $M$ such that $L = MN$ for some $N$.}
	\KwOut{The matrix $N$.}
	\Fn{\Factorize{L, M}}{
		$\mathmakebox[\widthof{$N$}][l]{p} \gets 0 \in \N^l$\;
		$N \gets 0 \in k^{m \times n}$\;
		\lFor{$j = 1,\dotsc,m$}{%
			$p_{\piv M_j} \gets j$%
		}
		\For(\tcp*[f]{embarrassingly parallel}){$j = 1,\dotsc,n$}{
			\While{$L_j \neq 0$}{
				\settowidth{\dimen0}{$N_j$}
				$\mathmakebox[\dimen0][l]{i} \gets \piv L_j$ \tcp*[r]{if $N$ exists, then $p_i \neq 0$}
				$\mathmakebox[\dimen0][l]{L_j} \gets L_j + M_{p_i}$\;
				$\mathmakebox[\dimen0][l]{N_j} \gets N_j + e_{p_i}$\;
			}
		}
		\Return{$N$}
	}
\end{algorithm}
\end{minipage}

\noindent
\begin{minipage}{\linewidth}
\begin{algorithm}[H]
	\caption{Sparsification.}
	\label{algo:sparsification}
	\KwIn{A graded $m \times n$-matrix $M$ such that $\rg^M_i \not\geq \rg^M_j$ if $i \leq j$.}
	\KwOut{A graded matrix $M' = V M$ for some valid invertible upper triangular matrix $V$.}
	$d_j \gets \emptyset$ for all $j$\;
	\For{$i = m,\dotsc,1$}{
		$u \gets \emptyset$\;
		\For{$j = 1,\dotsc,n$ with $M_{ij} \neq 0$}{
			\lIf{$\exists h \in d_j \colon \rg^M_i \leq \rg^{M}_h$}{
				$M_{ij} \gets 0$
			}
			\Else{
				$M'_{ij} \gets M_{ij}$\;
				$u \gets u \cup \{j\}$
			}
		}
		\lIf{$u = \{j\}$ for some $j$}{
			$d_j \gets d_j \cup \{i\}$
		}
	}
\end{algorithm}
\end{minipage}
\fi


\end{document}